\documentclass{amsart}
\usepackage[latin1]{inputenc}
\usepackage[T1]{fontenc}
\usepackage{amssymb, amsmath, color, textcomp, url,tikz, mathtools}
\usetikzlibrary{matrix,arrows}
\usepackage{courier}
\usetikzlibrary{fit}
\usetikzlibrary{positioning}
\usepackage[labelformat=empty]{caption}

\usepackage[normalem]{ulem}
\usepackage{soul}
\usepackage{mdwlist}

\usepackage{tikz-cd}

\RequirePackage{ifpdf}
\ifpdf
\usepackage[pdftex]{hyperref}
\else
\usepackage[hypertex]{hyperref}
\fi

\parskip=2pt
\parindent=5pt

\usepackage{enumerate,xspace}
\usepackage{chngcntr, csquotes} 
\theoremstyle{plain}
\newtheorem{theorem}{Theorem}[section]
\newtheorem{cor}[theorem]{Corollary}
\newtheorem{prop}[theorem]{Proposition}
\newtheorem{lemma}[theorem]{Lemma}

\newcounter{proofcount}
\AtBeginEnvironment{proof}{\stepcounter{proofcount}}

\newtheorem*{claim*}{Claim}
\makeatletter                  
\@addtoreset{claim}{proofcount}
\makeatother                   

\usepackage{pdfpages}

\newtheorem{thm}{Theorem}

\theoremstyle{definition}
\newtheorem{remark}[theorem]{Remark}
\newtheorem{fact}[theorem]{Fact}
\newtheorem{definition}[theorem]{Definition}
\newtheorem{example}[theorem]{Example}

\newcounter{substep}
\def\thesubstep{\arabic{substep}}

\newenvironment{substeps}{%
	\refstepcounter{substep}\noindent{(\thesubstep)}}%
{\em}

\newcounter{subsubstep}
\def\thesubsubstep{\arabic{subsubstep}}

\newenvironment{subsubsteps}[1]{%
	\refstepcounter{subsubstep}\noindent{(#1.\thesubsubstep)\ }\ }%
{\em}

\newcounter{substepC}
\def\thesubstepC{\arabic{substepC}}

{\em}

\newcounter{subsubstepC}[substepC]
\def\thesubsubstepC{\arabic{subsubstepC}}

{\em}

\newcommand{\nc}{\newcommand}

\nc{\Z}{\mathbb{Z}}
\nc{\Q}{\mathbb{Q}}
\nc{\N}{\mathbb{N}}
\nc{\F}{\mathbb{F}}
\nc{\UU}{\mathbb{U}}
\nc{\C}{\mathbb{C}}

\nc{\M}{\mathcal{M}}
\nc{\R}{\mathcal{R}}
\nc{\A}{\mathcal{A}}
\nc{\B}{\mathcal{B}}
\nc\LL{\mathcal L}
\nc\II{\mathcal I}
\nc\E{\mathcal E}

\nc{\stt}{\operatorname{St}}
\nc{\stab}{\operatorname{Stab}}
\nc{\GO}[1]{G_{#1}^{00}}
\nc{\sbgp}[1]{\langle\xspace {#1}\xspace\rangle}
\nc{\Conn}[1]{\langle\xspace {X}\xspace\rangle^{00}_{#1}}
\nc{\band}[1]{\bar d_{\mathcal{#1}}}
\nc\Def{\operatorname{Def}}

\nc{\dcl}{\operatorname{dcl}}

\nc{\acl}{\operatorname{acl}}

\nc{\nf}[1]{_{\mid {#1}}}
\nc{\restr}[1]{\xspace_{\upharpoonright {#1}}}

\nc\inv{ ^{-1}}

\nc{\tp}{\operatorname{tp}}
\nc\Spec{S^\mathrm{t}}
\nc\HS{S^\mathrm{h}}
\nc\U{\operatorname{U}}
\nc{\cf}{\text{cf.\,}}
\nc{\eg}{\text{e.g. }}

\nc{\InvR}[1]{S_{#1}(\bar{\R})^{\rm inv}_{R}}
\nc{\CohR}[1]{S_{#1}(\bar{\R})^{\rm fs}_{R}}
\nc{\InvRt}[1]{S^{\rm t}_{#1}(\bar{\R})^{\rm inv}_{R}}

\nc{\Inv}[1]{S_{#1}(\bar{\M})^{\rm inv}_{M}}
\nc{\Invh}[1]{S^{\rm h}_{#1}(\bar{\M})^{\rm inv}_{M}}
\nc{\Coh}[1]{S_{#1}(\bar{M})^{\rm fs}_{M}}
\nc{\Ext}[1]{S_{#1}(\M)^{\rm ext}}

\nc\bM{\overline{M}}

\def\Ind#1#2{#1\setbox0=\hbox{$#1x$}\kern\wd0\hbox to
	0pt{\hss$#1\mid$\hss} \lower.9\ht0\hbox to
	0pt{\hss$#1\smile$\hss}\kern\wd0}
\def\Notind#1#2{#1\setbox0=\hbox{$#1x$}\kern\wd0\hbox to
	0pt{\mathchardef\nn="0236\hss$#1\nn$\kern1.4\wd0\hss}\hbox to
	0pt{\hss$#1\mid$\hss}\lower.9\ht0 \hbox to
	0pt{\hss$#1\smile$\hss}\kern\wd0}

\title[]{Ellis enveloping semigroups in real closed fields}

\address{Departamento de \'Algebra, Geometr\'ia y Topolog\'ia; Facultad de Matem\'aticas;
	Universidad Complutense de Madrid; 28040 Madrid, Spain}

\email{ebaro@ucm.es}

\email{dpalacin@ucm.es}

\date{\today}

\author{El\'ias Baro and Daniel Palac\'in}

\thanks{Both authors are supported by Spanish STRANO  PID2021-122752NB-I00 and Grupos UCM 910444. }

\subjclass[2020]{14P10, 03C45, 03C64, 37B05}
\begin{document}
	\maketitle
	\begin{abstract} 
		We introduce the Boolean algebra of $d$-semialgebraic (more generally, $d$-definable) sets and prove that its Stone space is naturally isomorphic to the Ellis enveloping semigroup of the Stone space of the Boolean algebra of semialgebraic (definable) sets. For definably connected o-minimal groups, we prove that this family agrees with the one of externally definable sets in the one-dimensional case. Nonetheless, we prove that in general these two families differ, even in the semialgebraic case over the real algebraic numbers.  On the other hand, in the semialgebraic case we characterise real semialgebraic functions representing Boolean combinations of $d$-semialgebraic sets.  
	\end{abstract}

	\section{Introduction}
	
	A recurrent matter in real algebraic geometry is attempting to characterize some properties of an algebraic or semialgebraic set defined over a parameter set using a simpler set of parameters. For instance, in \cite{PR20} (see also \cite{FGa}) the authors prove that any affine algebraic variety defined over the real field is homeomorphic to a variety defined over the field $\mathbb{R}_{\text{alg}}$ of real algebraic numbers. They do not achieve this by applying Tarski's transfer principle, but instead by producing a deformation of the coefficients. To accomplish this, they consider sets of the form $Z\cap \mathbb{R}^n_{\text{alg}}$ where  $Z\subset \mathbb{R}^n$ is a semialgebraic set defined over $\mathbb{R}$. These sets are known as \emph{externally semialgebraic} and constitute the central topic of our paper. For example, the set
	$$X_\pi:=\{(x,y)\in  \mathbb{R}_{\rm alg}^2 \ | \ y<\pi \cdot x\}.$$
	More generally, one can define the notion of externally semialgebraic set for an arbitrary real closed field $R_1$. Namely, a subset $X$ of $R_1^n$ is {\em externally semialgebraic} if there exists a larger real closed field $R_2$ and a semialgebraic subset $Z\subset R_2^n$ such that $X=Z\cap R_1^n$.
	
	The problem that we address in this paper is in a certain sense similar to the one described above for real closed fields. We analyse whether any externally semialgebraic subset $X$ of $R_1^n$ can be described in a specific way using a semialgebraic subset from a larger real closed field $R_2$. More precisely, whether any externally semialgebraic set $X\subset R_1^n$ is {\em $d$-semialgebraic}, that is, if there exists a semialgebraic subset $Z\subset R_2^n$ defined over $R_1$ and a point $a\in R_2^n$ such that $X=(Z-a)\cap R_1^n$. 
	
 The objective is to prove for $n\ge 2$ that the collection of sets formed by  Boolean combinations of $d$-semialgebraic sets of  $R_1^n$ is different from the collection of externally semialgebraic sets of $R_1^n$. In fact, we show that $X_\pi$ above is not a Boolean combination of $d$-semialgebraic sets.  This is a reasonable statement, since multiplication by $\pi$ should not be possible to code just via  multiplication by real algebraic numbers and translations by transcendental numbers. 
 
 Although the problem can be stated in terms of classical semialgebraic geometry, the solution relies in model-theoretic tools. So, since the above notions adapt to {\em definable} sets, we shall not only focus on real closed fields and semialgebraic sets, but on arbitrary (o-minimal) structures and definable sets, which are precisely the semialgebraic ones in the real algebraic setting. It is worth noticing that the study of {\em externally definable} sets in model theory is a recurrent topic. This goes back to \cite{BP98} where the authors prove that externally semialgebraic subsets of a real closed field are precisely finite unions of convex subsets, a crucial property in our work. 
	
	The introduction of $d$-definable sets and the motivation to show that these two collections of sets are not equal comes from the theory of Ellis enveloping semigroups.	
	In his seminal work \cite{lN09}, Newelski made several connections between model theory and topological dynamics, which has become an active area of research over the past decade, see for instance \cite{CS18,GPP15,J15,K17,P13,PY16} and \cite{YL15} for some further reading. 
	
	We briefly recall Newelski's construction. Let $G$ be a definable group in a structure $M$. The group  acts naturally by homeomorphisms on the compact Hausdorff space of types $S_G(M)$ concentrated in $G$ (that is, space of ultrafilters of definable subsets of $G$). A classical construction due to Ellis \cite{rE69} permits to associate to this action the so-called \emph{Ellis enveloping semigroup} $(E(S_G(M)),\circ)$, {\it i.e.} a  compact Hausdorff topological space equipped with a semigroup operation which is continuous in the first coordinate. In \cite[pp. 68-69]{lN09}, Newelski gives two abstract conditions to identify the Ellis enveloping semigroup $E(S_G(M))$ with the Stone space $S_G(M)$. Nonetheless, as he points out, these conditions may fail even when  $G$ is the additive group of algebraic real numbers and $M=\mathbb R_{\rm alg}$. 	
	
	Later on, Newelski \cite{lN12} observed that to understand the dynamics of the group action is more convenient to work with externally definable sets, and consider the action of $G$ on the Stone space $S^{\rm ext}_G(M)$ of ultrafilters of externally definable sets of $M$. In this situation, the semigroup $(E(S^{\rm ext}_G(M)),\circ)$ turns out to be $(S^{\rm ext}_G(M),*)$ itself, with a well-known model theoretic operation $*$ called \emph{coheir product}.
	
	Whilst considering the action on $S^{\rm ext}_G(M)$ eases the model-theoretic treatment of the Ellis semigroup,  it seems also natural to ask whether $E(S_G^{\rm ext}(M)) \simeq S_G^{\rm ext}(M)$ are naturally isomorphic to $E(S_G(M))$ as Ellis semigroups. 	
	To the best of our knowledge, there is no example in the literature showing they are different. In fact, the question has been completely neglected, apart from the aforementioned remark due to Newelski. 
	 Hence, the primary objective of our paper is to demonstrate that $S_G^{\rm ext}(M)$ and $E(S_G(M))$ are \emph{not} naturally isomorphic in general.
	
	In order to achieve this objective, in Sections \ref{sec:2} and \ref{sec:3} we revisit and refine certain aspects of the framework established by Newelski in his groundbreaking works \cite{lN09,lN12} and \cite{lN14}, by studying Ellis semigroups of Stone spaces. As a consequence we obtain the following characterization:
	
\begin{thm}[Corollary \ref {C:Equiv-Ellis-ext}]  Let $M$ be an arbitrary structure.	The Ellis semigroups $S_G^{\rm ext}(M)$ and $E(S_G(M))$ are canonically isomorphic if and only if every externally definable subset of $G(M)$ is a (positive) Boolean combination of $d$-definable sets.
\end{thm}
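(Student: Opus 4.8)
The plan is to reduce the statement to Stone duality, using the identification of $E(S_G(M))$ established in the previous sections. Write $\mathcal{B}_{\mathrm{ext}}$ for the Boolean algebra of externally definable subsets of $G(M)$ and $\mathcal{B}_d$ for the Boolean subalgebra of $\mathcal{P}(G(M))$ generated by the $d$-definable subsets of $G(M)$. The crucial input from Sections \ref{sec:2} and \ref{sec:3} is the natural isomorphism of Ellis semigroups between $E(S_G(M))$ and the Stone space $S^d$ of $\mathcal{B}_d$; granting this, it suffices to compare $S_G^{\rm ext}(M)$, the Stone space of $\mathcal{B}_{\mathrm{ext}}$, with $S^d$. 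I would first record the inclusion $\mathcal{B}_d\subseteq\mathcal{B}_{\mathrm{ext}}$: every $d$-definable set is, by definition, the trace on $G(M)$ of a set definable in a larger model, hence externally definable, and the externally definable sets form a Boolean algebra. I would also observe that $\mathcal{B}_d$ is already generated by the $d$-definable sets using unions and intersections only, since the complement of a $d$-definable set is again $d$-definable (replace the defining definable set by its complement, keeping the same parameter); this is what justifies the word \emph{positive} in the statement.

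Next, the inclusion of Boolean algebras dualizes, by Stone duality, to a canonical continuous surjection $\rho\colon S_G^{\rm ext}(M)\twoheadrightarrow S^d$ given by restricting an ultrafilter on $\mathcal{B}_{\mathrm{ext}}$ to $\mathcal{B}_d$. The substantive point, which I would extract from the refined framework of the earlier sections, is that $\rho$ is a morphism of Ellis semigroups: it is $G$-equivariant and it intertwines the coheir product on $S_G^{\rm ext}(M)$ with the composition on $E(S_G(M))$ under the identification $E(S_G(M))\cong S^d$. Composing $\rho$ with that identification produces the canonical map $S_G^{\rm ext}(M)\to E(S_G(M))$ referred to in the statement.

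I would then run Stone duality in both directions. For the backward implication, if every externally definable subset of $G(M)$ is a positive Boolean combination of $d$-definable sets, then $\mathcal{B}_{\mathrm{ext}}=\mathcal{B}_d$, so $\rho$ is a continuous bijection between compact Hausdorff spaces, hence a homeomorphism; being an Ellis morphism it is then an isomorphism of Ellis semigroups, and thus $S_G^{\rm ext}(M)$ and $E(S_G(M))$ are canonically isomorphic. For the forward implication, if the canonical map is an isomorphism, then $\rho$ is a homeomorphism, and Stone duality forces $\mathcal{B}_{\mathrm{ext}}=\mathcal{B}_d$; hence every externally definable set lies in $\mathcal{B}_d$ and is therefore a positive Boolean combination of $d$-definable sets.

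I expect the main obstacle to be the verification that $\rho$ is an Ellis-semigroup morphism rather than merely a continuous surjection. The topological content is the standard Stone adjunction and poses no difficulty, but matching the coheir product with the Ellis composition requires the precise description of $d$-definable sets and of the product operation developed in the earlier sections, and this is where the real work lies; once this compatibility is in hand, the equivalence follows formally from Stone duality.
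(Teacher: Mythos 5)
Your proposal is correct and takes essentially the same route as the paper: the paper's proof combines the same complement observation (giving positivity), Lemma \ref{L:ddefinable} (identifying the Boolean algebra generated by the $d$-definable sets with $\mathrm{Def}_G(M)^d$), and Corollary \ref{C:Equivalence}, whose content is exactly your Stone-duality argument --- the restriction map $r$ is a homeomorphism iff the two algebras coincide iff $\Lambda$ is an Ellis isomorphism, via the factorization $\Lambda(p)=\ell^{\A}_{r(p)}$. The ``main obstacle'' you flag is precisely the short computation the paper carries out in the proof of Corollary \ref{C:Equivalence}, so your plan closes without any gap.
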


	In Section \ref{sec:4} we then focus in the o-minimal context. 	Note that if the universe of $M$ is the (Dedekind complete) real field $\mathbb{R}$ then $S^{\rm ext}_G(M)=S_G(M)$. Therefore, to find an example where $S^{\rm ext}_G(M)$ is not $E(S_G(M))$ it is natural to consider either $M$ an $\aleph_0$-saturated o-minimal structure, or $M=\mathbb{R}_{\text{alg}}$. We prove the following result:
	
\begin{thm}[Theorem \ref{T:1dim} and \ref{main}, Corollary \ref{C:Alg}] Let $M$ be an o-minimal structure and let $G$ be a definable group.  The following hold:   
    \begin{enumerate}[$i)$]
    	\item If $\dim(G)=1$, then $(S^{\rm ext}_G(M),*)$ and $(E(S_G(M)),\circ)$ are naturally isomorphic as Ellis semigroups.

	   	\item If $M$ is an $\aleph_0$-saturated expansion of a real closed field and $G=(M^2,+)$, then  $(S^{\rm ext}_G(M),*)$ and $(E(S_G(M)),\circ)$ are not naturally isomorphic as Ellis semigroups.
		    	
    	\item If $M$ is an expansion of the field $\mathbb{R}_{\text{alg}}$ and $G=(\mathbb{R}^2_{\text{alg}},+)$, then  $(S^{\rm ext}_G(M),*)$ and $(E(S_G(M)),\circ)$ are not naturally isomorphic as Ellis semigroups.
    \end{enumerate}
\end{thm}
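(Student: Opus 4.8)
The plan is to derive all three statements from the criterion in Corollary \ref{C:Equiv-Ellis-ext}: the two Ellis semigroups are canonically isomorphic precisely when every externally definable subset of $G(M)$ is a positive Boolean combination of $d$-definable sets. Thus $i)$ amounts to proving that this representability always holds when $\dim(G)=1$, whereas $ii)$ and $iii)$ amount to exhibiting one externally definable set that is \emph{not} so representable. For $i)$ I would first pass to the definably connected component $G^0$, which has finite index, so that an externally definable subset of $G(M)$ splits as finitely many translates by $M$-points of externally definable subsets of $G^0(M)$; since translation by $M$-points preserves both $d$-definability and positive Boolean combinations, it suffices to treat $G^0$. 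A one-dimensional definably connected o-minimal group is abelian, divisible, and carries an $M$-definable invariant (linear or cyclic) order, so by o-minimality applied in $\bM$ every externally definable subset of $G^0(M)$ is a finite union of points and convex pieces, each convex piece being of the form $\{x\in G^0(M): a< x< b\}$ for cuts $a,b$ realized by elements of $\bM$. The key observation is that a half-set $\{x\in G^0(M): x< b\}$ is exactly the $d$-definable set $(Z+b)\cap G^0(M)$, where $Z=\{x: x<e\}$ is the $M$-definable negative ray and $b\in\bM$; writing each convex piece as an intersection of two such half-sets and taking the union over the finitely many pieces expresses the set as a positive Boolean combination of $d$-definable sets, and Corollary \ref{C:Equiv-Ellis-ext} gives the isomorphism.

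For $ii)$ I would take a slope $s\in\bM\setminus M$ generic over $M$ (that is, transcendental over $M$ in the underlying real closed field) and consider the externally definable half-plane trace $H=\{(x,y)\in M^2: y< s x\}$. Assuming towards a contradiction that $H$ is a Boolean combination of $d$-definable sets $X_k=(Z_k-a_k)\cap M^2$ with $a_k=(a_{k,1},a_{k,2})\in\bM^2$, I would analyse the combination fibrewise over the lines $x=t$, $t\in M$. The fibre $H_t=\{y\in M: y< st\}$ is a single downward ray whose top endpoint is the cut determined by $st$, whereas, since $(t,y)\in X_k$ iff $y+a_{k,2}$ lies in the fibre of $Z_k$ over $t+a_{k,1}$, the fibre of each $X_k$ is a finite union of intervals with endpoints among the values $\theta_{k,r}(t+a_{k,1})-a_{k,2}$, for finitely many $M$-definable (semialgebraic) functions $\theta_{k,r}$. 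Hence, for $t$ in a large definable subset of $M$, the cut of $st$ over $M$ must coincide with the cut of some $g(t)=\theta_{k,r}(t+a_{k,1})-a_{k,2}$, and a pigeonhole over the finitely many pairs $(k,r)$ fixes a single such $g$ on an infinite set.

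The crux, and the step I expect to be the main obstacle, is to upgrade this coincidence of cuts over $M$ to an honest equality $st=g(t)$ on an interval of $\bM$. This is exactly the content of the characterisation of semialgebraic functions representing Boolean combinations of $d$-semialgebraic sets; I would carry it out by viewing $t\mapsto$ (cut of $st$ over $M$) as a definable family of $1$-types over $M$ and comparing it, via cell decomposition in $\bM$ together with the density of $M$ in the relevant cells, with the family $t\mapsto$ (cut of $g(t)$ over $M$). Once $st=g(t)=\theta(t+a_{1})-a_{2}$ holds on an interval, differentiation gives $s=\theta'(t+a_{1})$ there, so the $M$-definable function $\theta'$ is constant equal to $s$ on an interval; but the constant value of an $M$-semialgebraic function on a maximal interval of constancy is itself $M$-definable, forcing $s\in M$, a contradiction. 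Therefore $H$ is not a positive Boolean combination of $d$-definable sets, and Corollary \ref{C:Equiv-Ellis-ext} shows the two Ellis semigroups are not canonically isomorphic.

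Finally, $iii)$ I would obtain from $ii)$ by transfer. Over $M=\mathbb{R}_{\mathrm{alg}}$ the element $\pi\in\mathbb{R}\setminus\mathbb{R}_{\mathrm{alg}}$ is transcendental, hence generic over $M$, so $X_\pi=\{(x,y)\in\mathbb{R}^2_{\mathrm{alg}}: y<\pi x\}$ plays the role of the half-plane above. Since the fibrewise argument only uses that the slope is generic over the small field, and not the saturation of the ambient model, it applies verbatim; alternatively, were $X_\pi$ a Boolean combination of $d$-semialgebraic sets over $\mathbb{R}_{\mathrm{alg}}$, passing to an $\aleph_0$-saturated elementary extension and pushing the representation forward would contradict $ii)$. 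Either way $X_\pi$ is not a positive Boolean combination of $d$-semialgebraic sets, and Corollary \ref{C:Equiv-Ellis-ext} yields the claim.
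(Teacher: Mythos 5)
Your global strategy (reduce everything to Corollary \ref{C:Equiv-Ellis-ext} and then decide whether every externally definable set is a positive Boolean combination of $d$-definable sets) is exactly the paper's, and your reduction of i) to the definably connected component is a legitimate patch. However, your proof of i) only covers the non-definably-compact case, which is the paper's Case 1 of Theorem \ref{T:1dim}: there $G$ carries a translation-invariant \emph{linear} order and indeed $\{x<_G b\}=\big((-\infty,0)+b\big)\cap G(M)$. Your ``key observation'' breaks down when $G$ is definably compact: such a group carries only an invariant \emph{circular} order $R(x,y,z)$, the induced linear orders $<_{x_0}$ depend on the basepoint and are not translation-invariant, and there is no $M$-definable ``negative ray'' $Z$ with $(Z+b)\cap G(M)=\{x \ | \ R(0,x,b)\}$: translating an $M$-definable arc moves \emph{both} of its endpoints, while the set you need has one endpoint at $0$ and one at the external point $b$. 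This is precisely where the work in the paper's Case 2 lies: one shows $R(0,M,a)=R(\tfrac12+a,M,a)\cap R(0,M,\tfrac12)$, an intersection of a genuine translate $\big(R(\tfrac12,\bar M,0)+a\big)\cap M$ with an $M$-definable arc, using the unique element of order $2$ and a two-case analysis according to whether $R(0,a,\tfrac12)$ or $R(0,\tfrac12,a)$ holds. Your argument is missing this entirely.

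In ii), the step you yourself flag as the crux --- upgrading coincidence of cuts over $M$ to an honest equality $st=g(t)$ on an interval of $\bar M$ --- is exactly the step the paper identifies as the central difficulty and deliberately avoids, because it is not available: the remark following Proposition \ref{ex} warns that passing from $\tp(g(x)/M)=\tp(e\cdot x/M)$ to $g(x)=e\cdot x + k$ is ``far from clear'', and Example \ref{extddef} shows that two different bounding functions ($e\cdot x$ and $\tfrac{x^2}{2}+e\cdot x$) can induce the same trace, so cut-coincidence does not imply equality of functions. The paper's actual proof never establishes any such equality; it fixes the slope to realize the very particular cut $p_{\rm id}$, works directly with the equality of traces (its condition (ME)), and derives a numerical contradiction via the mean value theorem, using $\aleph_0$-saturation of $M$ in an essential way (Claim 4 of Proposition \ref{ex}). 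Your sketch (``definable family of $1$-types \dots density of $M$'') is not an argument, so the core of ii) is a gap; note also that ``transcendental over $M$'' is the wrong hypothesis on the slope (an element $s>M$ is transcendental, yet $\{y<sx\}\cap M^2$ is $M$-definable --- you need $\tp(s/M)$ non-definable). Consequently iii) cannot be inherited from ii) as you propose: the paper's ii)-argument does not apply ``verbatim'' over $\mathbb{R}_{\rm alg}$, which is archimedean, has no type $p_{\rm id}$, and is not $\aleph_0$-saturated; and the alternative transfer --- pushing a Boolean-combination representation of $X_\pi$ forward to a saturated elementary extension --- is not meaningful, since equality of traces on $\mathbb{R}_{\rm alg}^2$ says nothing about traces on $M^2$, and ii) concerns a different cut in any case. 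The paper instead proves iii) by a separate argument, Proposition \ref{prop:realalg}: exploiting Dedekind completeness of $\mathbb{R}$ and the standard part map, it shows that if $\{y<h(x)\}$ is a Boolean combination of $d$-semialgebraic sets then $h$ is piecewise of the form $f_i(x+c_i)+d_i$ with $f_i$ defined over $\mathbb{R}_{\rm alg}$ and $c_i,d_i\in\mathbb{R}$, and then evaluation at two algebraic points rules out $h(x)=\pi x$. It is precisely the completeness of $\mathbb{R}$ --- unavailable in the saturated setting of ii) --- that makes a rigorous version of your ``upgrade'' work there.
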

In addition, in Remark \ref{defcompact} we also construct an example of a definably compact group $G$ for which $(S^{\rm ext}_G(M),*)$ and $(E(S_G(M)),\circ)$ are not naturally isomorphic as Ellis semigroups.

Finally, before finishing the introduction, we make some comments on the proof of the last theorem. For the proof of $iii)$ we establish a general statement (Proposition \ref{prop:realalg}) that enables us to characterize the real semialgebraic functions representing Boolean combinations of $d$-semialgebraic sets. Using this proposition, we deduce that $X_\pi$ mentioned above is not a Boolean combination of $d$-semialgebraic sets. Furthermore, this proposition allows us to demonstrate that many other externally semialgebraic sets (intuitively, any externally semialgebraic set described by a semialgebraic function using the product by transcendental numbers) are also not expressible as such Boolean combinations.

The situation in $ii)$ is quite different. We prove that a specific  externally definable set is not a Boolean combination of $d$-definable sets. Although the general technique can be applied to other externally definable sets, the proof depends on certain bounds calculated for the specific example we are considering. Moreover, we show there are examples of externally definable sets that at first sight could seem $d$-definable, but are not (see Example \ref{extddef}).

\subsection*{Acknowledgments} We thank the referee for the careful reading of the paper and all suggestions made. In particular, we wish to express our gratitude for pointing us the work of Adam Malinowski and Ludomir Newelski, concerning the alternative representation of the Ellis semigroup as Stone spaces in \cite{aM23,MN23}.

	\section{Set-up on Ellis semigroups of Stone spaces}\label{sec:2}
	
	The goal of the section is to give a general framework to study the Ellis enveloping of some spaces of types. It will be convenient to regard these as Stone spaces. So, we fix a very general set-up in terms of Boolean algebras following the approach of Newelski \cite[Section 1]{lN12} and \cite[Section 1]{lN14}, see also \cite[Section 3]{gC21}.
	
	\begin{definition}
		An {\em Ellis semigroup} is a semigroup $(E,\cdot)$ which is a compact Hausdorff topological space such that $\cdot$ is continuous in the first coordinate, that is, for each $y\in E$ the map $x\mapsto x\cdot y$ is continuous. 
	\end{definition}

	A natural example of Ellis semigroup is the Ellis enveloping semigroup. We recall briefly its construction, see \cite{rE69}. Suppose that $S$ is a $G$-flow, {\it i.e.} let $G$ be a group with the discrete topology acting on a compact Hausdorff space $S$ by homeomorphisms. Thus the action is given by a (continuous) homomorphism $G\to \mathrm{Homeo}(S)\le S^S$ with $g\mapsto \ell_g$. The {\em Ellis enveloping semigroup}  $E(S)$ is the closure of $\{\ell_g\}_{g\in G}$ in $S^S$, where $S^S$ is equipped with the product topology.
	
	\begin{fact}[Ellis]
The pair  $(E(S),\circ)$ is an Ellis semigroup, where $\circ$ denotes the composition of functions.
	\end{fact}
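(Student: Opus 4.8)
The plan is to verify in turn the three defining properties of an Ellis semigroup for $(E(S),\circ)$: that $E(S)$ is compact Hausdorff, that it is closed under composition so that $\circ$ is a well-defined semigroup operation, and that $\circ$ is continuous in the first coordinate. Throughout I would exploit the basic feature of the product topology on $S^S=\prod_{s\in S}S$, namely that a net $(f_i)$ converges to $f$ precisely when $f_i(s)\to f(s)$ for every $s\in S$, i.e. pointwise.

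First I would dispatch the topological claims. By Tychonoff's theorem $S^S$ is compact, and being a product of copies of the Hausdorff space $S$ it is Hausdorff. Since $E(S)$ is by definition the closure of $\{\ell_g\}_{g\in G}$, it is a closed subspace of $S^S$, hence compact, and it inherits the Hausdorff property. Continuity in the first coordinate is equally direct: fixing $y\in E(S)$ and taking a net $x_i\to x$ in $E(S)$, for each $s\in S$ we have $(x_i\circ y)(s)=x_i(y(s))\to x(y(s))=(x\circ y)(s)$ by pointwise convergence, so $x_i\circ y\to x\circ y$ and the map $x\mapsto x\circ y$ is continuous.

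The substantive point is that $E(S)$ is closed under composition, and here I would proceed in two stages, since composition is not jointly continuous on $S^S$ and one cannot simply pass to limits in both arguments. In the first stage, fix a generator $\ell_g$ and consider left composition $f\mapsto \ell_g\circ f$. Because $\ell_g$ is a homeomorphism of $S$, if $f_i\to f$ pointwise then $\ell_g(f_i(s))\to \ell_g(f(s))$ for each $s$, so this map is continuous on $S^S$. As $g\mapsto\ell_g$ is a homomorphism, it sends each generator $\ell_h$ to $\ell_g\circ\ell_h=\ell_{gh}$, again a generator; a continuous map carrying the dense set $\{\ell_h\}_h$ into the closed set $E(S)$ must carry its closure $E(S)$ into $E(S)$, so $\ell_g\circ f\in E(S)$ for every $f\in E(S)$.

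In the second stage I would fix $f\in E(S)$ and consider right composition $g\mapsto g\circ f$. By the argument already used for continuity in the first coordinate, this map is continuous on $S^S$, and by the first stage it sends each generator $\ell_g$ to $\ell_g\circ f\in E(S)$. Again a continuous map carrying the dense set $\{\ell_g\}_g$ into the closed set $E(S)$ carries $E(S)$ into $E(S)$, whence $g\circ f\in E(S)$ for all $g,f\in E(S)$. Since composition of functions is automatically associative, $(E(S),\circ)$ is a semigroup, and together with the preceding paragraphs it is an Ellis semigroup. The one genuine subtlety, and the step I expect to require the most care, is precisely this asymmetry in the two-stage argument: continuity of left composition relies on the continuity of the maps $\ell_g$, whereas continuity of right composition comes for free from the product topology, and both ingredients are needed in order to push the group multiplication of $\{\ell_g\}_g$ through to the entire closure.
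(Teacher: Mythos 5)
Your proof is correct. The paper itself gives no argument for this statement---it is quoted as a Fact with a citation to Ellis's \emph{Lectures on topological dynamics}---so there is no internal proof to compare against; what you have written is precisely the standard argument from the literature. In particular, you correctly isolate the one non-trivial point: closure of $E(S)$ under $\circ$ cannot be obtained by passing to limits in both arguments simultaneously, and your two-stage argument (left composition with the generators $\ell_g$ is continuous because each $\ell_g$ is continuous on $S$; right composition with a fixed $f\in S^S$ is continuous purely by the definition of the product topology) together with the fact that a continuous map sending a dense subset of $E(S)$ into the closed set $E(S)$ sends all of $E(S)$ into $E(S)$, is exactly how this asymmetry is handled classically.
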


	Now, fix a group $G$ and let $\A\subset \mathcal P(G)$ be a Boolean algebra. Consider the \emph{Stone space} $S(\A)$ of $\A$, which recall is the set of ultrafilters on $\A$. This is a compact Hausdorff totally disconnected topological space with the Stone topology, {\it i.e.} the topology generated by the family of subsets of the form $[X]=\{p\in S(\mathcal A) \ | \ X\in p\}$ where $X$ is an arbitrary element of $\A$. For $g\in G$, we write $p_g^\A$ to denote the principal ultrafilter $\{X\in\A \ |\ g\in X\}$ associated to $g$.
	
	Suppose that $\A$ is left-invariant, that is, it is closed under left-translation by elements of $G$. For $g\in G$ define  $\ell^{\A}_g:S(\A)\rightarrow S(\A)$ as 
	$$
	\ell^{\A}_g(q):=\{X\in \A \ | \ g^{-1}X\in q\}.$$ The inverse of $\ell^{\A}_g$ is clearly  $\ell^{\A}_{g^{-1}}$. Thus, each map $\ell_g^\A$ is a homeomorphism since for $X\in \A$ we have 
		\[(\ell_g^\A)\inv \left([X] \right) = \left\{ q\in S(\A) \ | \ X\in \ell_g^\A(q) \right\} = [g\inv X],
		\]
	which is a basic open subset of $S(\A)$.  Therefore $S(\A)$ is a $G$-flow. 		
	
	For an ultrafilter $p\in S(\A)$, consider the map
	\[
	d_p : \A\to \mathcal P(G), \ X\mapsto d_p X := \left\{ g\in G \ | \ g\inv X\in p \right\}.
	\]
	This is a homomorphism of Boolean algebras which preserves left-translation. Note that {\em a priori} there is no reason why the image of $d_p$ is contained in $\A$. 
	
	We recall the following definition from \cite{lN14}.
	
	\begin{definition}
	Let $\A\subset \mathcal P(G)$ be a left-invariant Boolean algebra. We say that $\A$ is {\em $d$-closed} if $d_qX\in \A$ for every $q\in S(\A)$ and $X\in \A$.
	\end{definition}

	\begin{remark}\label{R:d-closed}
	If a left-invariant Boolean algebra $\A$ is $d$-closed, then $\A$ is necessarily right-invariant, {\it i.e.} it is closed under right-translation by elements of $G$. Indeed, in general, given $X\in \A$ and $h\in G$ we have that
	\begin{equation}\label{eq:0}
	Xh = \left\{ g\in G \ | \ h^{-1}\in g^{-1}X \right\} = \left\{ g\in G \ | \ g^{-1}X \in p_{h^{-1}}^\A \right\} = d_{p_{h^{-1}}^\A} X.
	\end{equation}
	Thus, assuming that $\A$ is $d$-closed, we see that $Xh\in\A$.
	\end{remark}

We introduce the following notion, which also appears in \cite[Section 2.3]{aM23} in an equivalent form (see Remark \ref{rmk:Adleft} below).

\begin{definition}
Given a left-invariant Boolean subalgebra  $\A\subset \mathcal P(G)$ we define $\A^d\subset \mathcal P(G)$ to be the Boolean subalgebra generated by the sets of the form $d_p X$ for $p\in S(\A)$ and $X\in \A$.
\end{definition}

Note that by taking $h=1_G$ in the expression (\ref{eq:0}) of Remark \ref{R:d-closed} we immediately get that $\A\subset \A^d$, so $\A=\A^d$ whenever $\A$ is $d$-closed. Furthermore, we also have the following.

\begin{remark}\label{rmk:Adleft} Let $\A$ be a left-invariant Boolean algebra.
	\begin{enumerate}
		\item  The Boolean algebra $\A^d$ is left-invariant because $hd_qY=d_qhY$ for any $q\in S(\A)$, $h\in G$ and $Y\in \A$.
		\item If $\A\subset \B$ are two left-invariant Boolean algebras such that $\B$ is $d$-closed, then $\A^d\subset \B^d$. Indeed, given some set $X\in \A\subset \B$ and some $p\in S(\A)$, choose some ultrafilter $q\in S(\B)$ such that $q_{|\A} = p$. It then follows that
		\[
		g\in d_p X \ \Leftrightarrow \ g^{-1}X \in p \ \Leftrightarrow \ g^{-1}X \in q \ \Leftrightarrow \ g\in d_q X, 
		\]
		showing that  $d_pX = d_qX\in \B^d$, as required. In particular, if  $\B$ is $d$-closed then $\A^d\subset \B$.
	\end{enumerate}
It follows from (1) and (2) that $\A^d$ is the smallest left-invariant $d$-closed Boolean algebra containing $\A$. This results has also been proved independently in \cite[Section 2.2]{aM23}. Indeed, Malinowski defines $\A^d$ as the minimal $d$-closed left-invariant subalgebra of $\mathcal P(G)$ and then proves in \cite[Fact 2.22]{aM23} that it is precisely the Boolean algebra generated by the sets $d_p X$ for $p\in S(\A)$ and $X\in \A$.
\end{remark}

Next, we will also see that $\A^d$ is itself $d$-closed but before it is convenient to introduce some further notation. 

The following is a mere translation of  \cite[Lemma 1.5(1)]{lN12} to this general setting (cf. \cite[Lemma 3.10]{gC21}). We give the proof for the sake of completeness. 
	\begin{fact}\label{F:lmap}
	Let $\A\subset \B$ be two left-invariant Boolean subalgebras of $\mathcal P(G)$ such that $\A^d\subset \B$. For $p\in S(\B)$, the map
	\[
	\ell_p^\A : S(\A)\to S(\A), \ q\mapsto \ell_p^\A(q):= \left\{ X\in \A \ | \ d_q X\in p \right\}
	\]
	is well-defined and is the limit of $(\ell^{\A}_g)_{g\in G}$ with respect to the ultrafilter $p$ in the pointwise convergence topology in the space of functions from $S(\A)$ to $S(\A)$.
	\end{fact}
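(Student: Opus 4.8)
The plan is to treat the two assertions separately: that $\ell_p^\A(q)$ is a well-defined ultrafilter on $\A$, and that the resulting map is the pointwise ultralimit of $(\ell_g^\A)_{g\in G}$ along $p$. Both will reduce to elementary Boolean algebra once the role of the hypothesis $\A^d\subset\B$ is isolated.

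For well-definedness, the key observation is that for fixed $q\in S(\A)$ the map $d_q\colon\A\to\mathcal P(G)$ is a homomorphism of Boolean algebras whose image lies in $\A^d$, hence in $\B$ by hypothesis; this is exactly what makes the condition \enquote{$d_qX\in p$} meaningful, since $p$ is an ultrafilter on $\B$. I would then note that $\ell_p^\A(q)=\{X\in\A\mid d_qX\in p\}$ is precisely the preimage of the ultrafilter $p$ under the Boolean homomorphism $d_q$. Using that $d_q$ commutes with complements and finite meets (and is monotone), this preimage is automatically an ultrafilter on $\A$: compatibility with complements yields the ultrafilter dichotomy, while preservation of meets together with monotonicity gives closure under intersection and upward closure.

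For the limit, I would first invoke Tychonoff to see that $S(\A)^{S(\A)}$ with the product topology is compact Hausdorff, so that the ultralimit $\lim_p(\ell_g^\A)_{g\in G}$ exists and is unique; it then suffices to identify it with $\ell_p^\A$. The computational heart is the identity
\[
\{g\in G\mid X\in\ell_g^\A(q)\}=\{g\in G\mid g^{-1}X\in q\}=d_qX,
\]
which is immediate from the definition of $\ell_g^\A$. A basic pointwise neighborhood of $\ell_p^\A$ has the form $V=\{f\mid f(q_i)\in[X_i],\ i=1,\dots,n\}$ with $X_i\in\ell_p^\A(q_i)$, that is, $d_{q_i}X_i\in p$; by the displayed identity the set of $g$ with $\ell_g^\A\in V$ equals $\bigcap_{i=1}^n d_{q_i}X_i$, which lies in $\B$ and belongs to $p$ since $p$ is a filter. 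As this holds for every basic neighborhood, uniqueness of the ultralimit forces $\ell_p^\A=\lim_p(\ell_g^\A)$.

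I do not expect a genuine obstacle in the argument; the only point requiring care---and the one where the hypotheses actually bite---is ensuring that every set against which $p$ is tested, namely $d_qX$ and the finite intersections $\bigcap_i d_{q_i}X_i$, is $\B$-measurable. This is guaranteed precisely by $\A^d\subset\B$, and keeping this bookkeeping straight is the whole content of the proof.
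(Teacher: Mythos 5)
Your proof is correct and takes essentially the same route as the paper's: both rest on the identity $\{g\in G \mid X\in \ell_g^\A(q)\} = d_qX$, with well-definedness coming from $d_qX\in\A^d\subset\B$ (preimage of an ultrafilter under the Boolean homomorphism $d_q$) and the limit identified by testing against basic clopen sets. The only cosmetic differences are that you check convergence on basic product neighborhoods involving finitely many points at once (using that $p$ is a filter) instead of pointwise at each $q$, and you make explicit the ultrafilter-preimage argument the paper leaves implicit.
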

	\begin{proof}
 	Observe first that the set $\ell_p^\A(q)$ is well-defined since $d_qX\in \A^d\subset \B$ and $p\in S(\B)$. Furthermore, the set $\ell_p^\A(q)$ is an ultrafilter on $\A$. Hence, the map is well-defined. Also, for $g\in G$ observe that $\ell_g^{\A}$ coincides with the map associated to the principal ultrafilter $p_g^\B$ because 
 	\[
 	\ell_g^\A(q) = \left\{ X\in \A \ | \ d_q X\in p_g^\B  \right\} =\left\{ X\in \A \ | \ g\in d_q X \right\} =  \left\{ X\in \A \ | \ g\inv X \in q \right\}.
 	\]
 	  Finally,  for an ultrafilter $p\in S(\B)$ and $X\in\A$ we have that 
 	\begin{align*}
 	\lim_{g\to p}\ell^{\A}_g(q)\in [X] \ & \Leftrightarrow \ \left\{g\in G \ | \ \ell^{\A}_g(q)\in [X]\right\} \in p \ \Leftrightarrow \ \left\{g\in G \ | \ g\inv X\in q \right\} \in p \\ \ & \Leftrightarrow  \ d_q X \in p \ \Leftrightarrow \ X\in \ell^{\A}_p(q)  \ \Leftrightarrow \ \ell^{\A}_p(q)\in [X],
 	\end{align*} 
 	which shows that $\lim_{g\to p}\ell^{\A}_g = \ell_p^{\A}$. 
	\end{proof}
	
Note that for a left-invariant Boolean algebra $\A$, Fact \ref{F:lmap} provides a description of the Ellis semigroup $(E(S(\A)),\circ )$ of the $G$-flow $S(\A)$. Indeed, for any $\B$ left-invariant Boolean subalgebra of $\mathcal P(G)$ with $\A^d\subset \B$ we have that			
	\[
	E(S(\A)) = \left\{ \ell_p^\A \ | \  p\in S(\B) \right\} = \left\{ \ell_p^\A \ | \  p\in S(\A^d) \right\},
	\]
	where the second equality holds by considering the restriction map $S(\B) \twoheadrightarrow S(\A^d)$. Henceforth, to easier notation, we omit to write the superscript $\A$ in $\ell_p^\A$ when there is no possible confusion.

	In addition, Newelski proved \cite[Proposition 2.4]{lN14} (see also Proposition \ref{P:Epi} below): 
	\begin{fact}\label{F:Ellis-Newelski}
	Let $\A\subset\mathcal P(G)$ be a left-invariant $d$-closed Boolean sub\-algebra. We have that $(S(\A),*)$ is an Ellis semigroup, where $*$ is defined as $p*q:=\ell_p(q)$. Furthermore, the map 
	\[
	\Lambda : S(\A) \to E(S(\A)) , \ p\mapsto \ell_p,
	\]
	is an isomorphism of Ellis semigroups. 
	\end{fact}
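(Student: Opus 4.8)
The plan is to organize the entire argument around the single algebraic identity $\ell_{p*q} = \ell_p \circ \ell_q$, from which both assertions fall out. First I would record the set-up: since $\A$ is $d$-closed we have $\A = \A^d$, so Fact \ref{F:lmap} applied with $\B = \A$ guarantees that $\ell_p := \ell_p^\A$ is well-defined for every $p\in S(\A)$ and that $\ell_p(q)\in S(\A)$; hence $p*q := \ell_p(q)$ is a genuine binary operation on $S(\A)$. The same fact, together with the displayed description of $E(S(\A))$ preceding it, yields $E(S(\A)) = \{\ell_p^\A \mid p\in S(\A)\}$, so that $\Lambda$ is surjective by construction.

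The heart of the matter is to verify $\ell_{p*q} = \ell_p\circ\ell_q$ for all $p,q\in S(\A)$. I would do this by a direct filter computation: fixing $r\in S(\A)$ and $X\in\A$, I would unwind the condition $X\in\ell_{p*q}(r)$ into $d_q(d_r X)\in p$ and the condition $X\in\ell_p(\ell_q(r))$ into $d_{\ell_q(r)} X\in p$, and then show these coincide by proving the set equality $d_q(d_r X) = d_{\ell_q(r)} X$. After expanding definitions this reduces to the left-translation equivariance $g\inv(d_r X) = d_r(g\inv X)$, which is exactly the fact that each $d_r$ is a homomorphism of Boolean algebras commuting with left translation. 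Here $d$-closedness is indispensable: it is what keeps $d_r X$ and $d_q(d_r X)$ inside $\A$, so that all the nested membership conditions are even meaningful. This one identity simultaneously expresses that $\Lambda$ is a semigroup homomorphism and, via associativity of $\circ$, that $*$ is associative.

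It then remains to see that $\Lambda$ is a homeomorphism. For injectivity I would evaluate at the principal ultrafilter of the identity: taking $h = 1_G$ in equation (\ref{eq:0}) gives $d_{p_{1_G}^\A} X = X$, whence $\ell_p(p_{1_G}^\A) = p$; thus $\ell_p = \ell_{p'}$ forces $p = p'$ (and incidentally $p_{1_G}^\A$ is a right identity for $*$). For continuity I would note that for fixed $q$ and $X$ the preimage $\{p\in S(\A) \mid X\in\ell_p(q)\}$ equals $[d_q X]$, which is clopen because $d_q X\in\A$; this shows both that $*$ is continuous in the first coordinate and, since the product topology on $S(\A)^{S(\A)}$ is the topology of pointwise convergence, that $\Lambda$ is continuous. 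Finally, $\Lambda$ is a continuous bijection from the compact space $S(\A)$ onto the Hausdorff space $E(S(\A))\subseteq S(\A)^{S(\A)}$, hence a homeomorphism; combined with the homomorphism identity above and the fact (due to Ellis) that $(E(S(\A)),\circ)$ is an Ellis semigroup, this transports the structure and shows that $(S(\A),*)$ is an Ellis semigroup isomorphic to $(E(S(\A)),\circ)$ via $\Lambda$.

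I expect the main obstacle to be the filter computation establishing $\ell_{p*q} = \ell_p\circ\ell_q$: although conceptually it is just an unwinding of definitions, one must be careful to keep every auxiliary set inside $\A$ (precisely where $d$-closedness is used) and to track correctly how left translation interacts with the operator $d_r$. Everything else — injectivity via evaluation at $p_{1_G}^\A$, continuity via the clopen preimage $[d_q X]$, and the compact-to-Hausdorff upgrade to a homeomorphism — is then essentially mechanical.
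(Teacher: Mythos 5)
Your proposal is correct, and it is in fact more self-contained than the paper's own treatment. The paper never proves Fact \ref{F:Ellis-Newelski} directly: it quotes it as Newelski's result \cite[Proposition 2.4]{lN14} and instead proves the generalization Proposition \ref{P:Epi}, whose proof contains exactly your continuity argument (the preimage of a subbasic set is the clopen set $[d_qX]$) and your surjectivity argument (closed image containing the $\ell_g$'s plus Fact \ref{F:lmap}), but nowhere verifies the semigroup identity $\ell_{p*q}=\ell_p\circ\ell_q$, the associativity of $*$, or even that $*$ respects anything algebraic --- all of that is left to Newelski's original proof. Your verification of that identity, via $d_q(d_rX)=d_{\ell_q(r)}X$, is precisely the computation the paper performs inside Lemma \ref{L:d-closed} (there written $d_p(d_qY)=d_{\ell_p^\A(q)}Y$ and used to show that $\A^d$ is $d$-closed), so your argument reuses a computation of the paper for a purpose the paper leaves implicit; this is the genuine added value of your write-up. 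Where you diverge is injectivity: you evaluate at the principal ultrafilter, using $d_{p_{1_G}^\A}X=X$ (equation (\ref{eq:0}) with $h=1_G$) to get $\ell_p(p_{1_G}^\A)=p$, whereas the paper separates two distinct ultrafilters by a generator $d_qX$ of $\A^d$ via $d_qX\in p_i\Leftrightarrow X\in\ell_{p_i}^\A(q)$. Your version is cleaner here and yields the bonus that $p_{1_G}^\A$ is a right identity for $*$; the paper's version is the one that survives in the more general setting of Proposition \ref{P:Epi}, where the ultrafilters live on $\A^d$ rather than on $\A$. Two minor points: associativity of $*$ follows directly by evaluating your identity at $r$ (no need to invoke associativity of $\circ$ together with injectivity, though that route also works), and the equality $E(S(\A))=\left\{ \ell_p \ | \ p\in S(\A) \right\}$ you cite from the display after Fact \ref{F:lmap} is itself only sketched in the paper, so your proof inherits the same implicit compactness argument that the paper's does.
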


Now we see that $\A^d$ is $d$-closed. So, the result above will hold true for $\A^d$. 

\begin{lemma}\label{L:d-closed}
	The Boolean algebra $\A^d$ is the smallest left-invariant $d$-closed Boolean algebra containing  $\A$.
\end{lemma}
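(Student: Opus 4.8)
The plan is to observe that almost everything in the statement has already been recorded in the preceding remarks, so that the only genuine task is to verify that $\A^d$ is $d$-closed. Indeed, the inclusion $\A\subset\A^d$ follows by taking $h=1_G$ in equation (\ref{eq:0}), left-invariance of $\A^d$ is Remark \ref{rmk:Adleft}(1), and the minimality---that any left-invariant $d$-closed Boolean algebra $\B$ with $\A\subset\B$ already contains $\A^d$---is exactly the last sentence of Remark \ref{rmk:Adleft}(2). Hence it remains only to show that $d_qY\in\A^d$ for every $q\in S(\A^d)$ and every $Y\in\A^d$.

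First I would reduce to generators. For a fixed $q\in S(\A^d)$ the map $Y\mapsto d_qY$ is a homomorphism of Boolean algebras on $\A^d$: using that $\A^d$ is left-invariant (so that $g^{-1}Y\in\A^d$) and that $q$ is an ultrafilter, one checks $d_q(Y_1\cap Y_2)=d_qY_1\cap d_qY_2$ and $d_q(G\setminus Y)=G\setminus d_qY$. Since $\A^d$ is by definition the Boolean algebra generated by the sets $d_pX$ with $p\in S(\A)$ and $X\in\A$, and since $\A^d$ is closed under Boolean operations, it therefore suffices to prove that each generator satisfies $d_q(d_pX)\in\A^d$.

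The key step is the computation collapsing the double application of $d$ to a single one. Applying Fact \ref{F:lmap} with $\B=\A^d$, the element $r:=\ell_q^\A(p)=\{Y\in\A \ | \ d_pY\in q\}$ is a well-defined ultrafilter in $S(\A)$. Using the left-invariance identity $g^{-1}(d_pX)=d_p(g^{-1}X)$ from Remark \ref{rmk:Adleft}(1), I would then compute
$$d_q(d_pX)=\{g\in G \ | \ g^{-1}(d_pX)\in q\}=\{g\in G \ | \ d_p(g^{-1}X)\in q\}=\{g\in G \ | \ g^{-1}X\in r\}=d_rX,$$
where the third equality is just the definition of $r$ applied to $g^{-1}X\in\A$. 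As $d_rX\in\A^d$ by construction, this proves $d_q(d_pX)\in\A^d$, and combined with the reduction above shows that $\A^d$ is $d$-closed, completing the proof.

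I expect the main obstacle to be spotting that $d_q\circ d_p$ is again of the form $d_r$ for the ultrafilter $r=\ell_q^\A(p)$; once this ``associativity'' is guessed, the verification is a short ultrafilter manipulation, and the only point requiring genuine care is the appeal to Fact \ref{F:lmap} to guarantee that $r$ really is an ultrafilter on $\A$ (for which the hypothesis $\A^d\subset\B=\A^d$ is trivially met).
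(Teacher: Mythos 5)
Your proposal is correct and takes essentially the same approach as the paper: both reduce to generators of $\A^d$ via the fact that $d_q$ is a Boolean algebra homomorphism, and then collapse the composition $d_q\circ d_p$ to a single $d_{\ell_q^\A(p)}$, invoking Fact \ref{F:lmap} to guarantee that $\ell_q^\A(p)$ is an ultrafilter on $\A$. The only differences are cosmetic: your variable names are swapped relative to the paper's, and you spell out the homomorphism property and the minimality clause slightly more explicitly.
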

\begin{proof} By Remark \ref{rmk:Adleft}, it remains to prove that $\A^d$ is $d$-closed. Since for each $p\in S(\A^d)$ the map $d_p:\A^d\to \mathcal P(G)$ is a homomorphism of Boolean algebras, it suffices to show that $d_p X \in \A^d$ for arbitrary $p\in S(\A^d)$ and $X=d_q Y$ with $q\in S(A)$ and $Y\in \A$. In that case, and since $\ell^\A_p(q)\in S(\A)$ by Fact \ref{F:lmap}, we have
	\begin{align*}
	g\in d_pX  = d_p(d_qY) \ & \Leftrightarrow \ g\inv d_qY\in p \ \Leftrightarrow \ d_q g\inv Y \in p \\ \ & \Leftrightarrow \ g\inv Y \in \ell_p^\A(q) \ \Leftrightarrow \ g\in d_{\ell_p^\A(q)} Y.
	\end{align*}
Hence, we deduce that $d_pX=d_{\ell_p^\A(q)}Y$ belongs to $\A^d$, as desired.
\end{proof}

As a consequence, it follows from Fact \ref{F:Ellis-Newelski} that $S(\A^d)$ is naturally isomorphic to $E(S(\A^d))$ as Ellis semigroups. Furthermore, an inspection of the original proof in \cite[Proposition 2.4]{lN14} yields the following. In fact, it explicitly appears in \cite[Theorem 2.24]{aM23} where it is credited to Newelski. 
	
	\begin{prop}\label{P:Epi}
	Let $\A\subset\B$ be two  left-invariant Boolean subalgebras of $\mathcal P(G)$ such that  $\B$ is $d$-closed. The map 
	\[
	\Lambda : S(\B) \to E(S(\A)), \ p\mapsto \Lambda(p)=\ell_p^\A
	\] 
	is an epimorphism of Ellis semigroups. Furthermore, it is an isomorphism when $\A^d = \B$.
	\end{prop}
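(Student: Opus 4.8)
The plan is to verify in turn that $\Lambda$ is well defined, continuous, surjective and multiplicative, and finally injective under the extra hypothesis $\A^d=\B$. First, since $\B$ is $d$-closed, Remark \ref{rmk:Adleft} gives $\A^d\subset\B$, so Fact \ref{F:lmap} guarantees that $\ell_p^\A$ is a well-defined element of $E(S(\A))$ (being the limit $\lim_{g\to p}\ell^\A_g$) for every $p\in S(\B)$; moreover, by Fact \ref{F:Ellis-Newelski}, $(S(\B),*)$ is an Ellis semigroup with $p*q=\ell^\B_p(q)$, so the statement makes sense. Surjectivity of $\Lambda$ is then immediate from the description of $E(S(\A))$ obtained right after Fact \ref{F:lmap}, namely $E(S(\A))=\{\ell_p^\A \mid p\in S(\B)\}$.

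For continuity I would recall that $E(S(\A))\subset S(\A)^{S(\A)}$ carries the product topology, so it suffices to show that for each fixed $r\in S(\A)$ the evaluation $p\mapsto\ell_p^\A(r)$ is continuous into the Stone space $S(\A)$. Testing against a basic clopen $[X]$ with $X\in\A$, one computes
$$\{p\in S(\B) \mid X\in\ell_p^\A(r)\}=\{p\in S(\B)\mid d_rX\in p\}=[d_rX],$$
which is clopen because $d_rX\in\A^d\subset\B$; hence $\Lambda$ is continuous.

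The core of the argument is multiplicativity, i.e. $\ell^\A_{p*q}=\ell^\A_p\circ\ell^\A_q$ for $p,q\in S(\B)$. I would evaluate both sides at an arbitrary $r\in S(\A)$ and test membership of an arbitrary $X\in\A$. The key identity is $d_{\ell^\A_q(r)}X=d_q(d_rX)$: unwinding the definition of $\ell^\A_q(r)$ and using that $d_r$ commutes with left-translation (so $d_r(g\inv X)=g\inv d_rX$), one gets $g\in d_{\ell^\A_q(r)}X \Leftrightarrow g\inv d_rX\in q \Leftrightarrow g\in d_q(d_rX)$. Feeding this into $\ell^\A_p$ shows $X\in\ell^\A_p(\ell^\A_q(r))\Leftrightarrow d_q(d_rX)\in p$; on the other hand, unwinding $p*q=\ell^\B_p(q)$ and applying it to $Y=d_rX\in\B$ gives $X\in\ell^\A_{p*q}(r)\Leftrightarrow d_rX\in p*q\Leftrightarrow d_q(d_rX)\in p$. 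The two conditions coincide, so $\Lambda$ is a homomorphism. This matching of the product $*$ on $S(\B)$ with composition in $E(S(\A))$ is the step I expect to require the most care, since it is where the $d$-closedness of $\B$ and the left-translation invariance of $d_r$ must be combined correctly.

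Finally, for injectivity when $\A^d=\B$, I would use that $\B=\A^d$ is generated as a Boolean algebra by the sets $d_qY$ with $q\in S(\A)$ and $Y\in\A$, together with the fact that two ultrafilters agreeing on a generating set agree everywhere (their agreement locus is a subalgebra containing the generators). Since $d_qY\in p\Leftrightarrow Y\in\ell^\A_p(q)$, the equality $\ell^\A_p=\ell^\A_{p'}$ forces $p$ and $p'$ to agree on all such generators, whence $p=p'$. A continuous bijective homomorphism from the compact space $S(\B)$ onto the Hausdorff space $E(S(\A))$ is automatically a homeomorphism, so $\Lambda$ is then an isomorphism of Ellis semigroups, completing the proof.
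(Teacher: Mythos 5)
Your proof is correct, and its skeleton coincides with the paper's: the continuity computation (the preimage of the basic clopen $[X]$ under $p\mapsto \ell_p^\A(r)$ is $[d_rX]$, which is basic open in $S(\B)$ because $d_rX\in\A^d\subset\B$), the injectivity argument through the generators $d_qY$ of $\A^d$, and the concluding compact-to-Hausdorff step are exactly those in the paper. The genuine difference is that you verify the semigroup homomorphism property $\Lambda(p*q)=\Lambda(p)\circ\Lambda(q)$ explicitly, via the identity $d_{\ell_q^\A(r)}X=d_q(d_rX)$. The paper's written proof never checks multiplicativity at all --- it only establishes continuity, surjectivity and (under $\A^d=\B$) injectivity, in effect deferring the homomorphism property to the ``inspection of the original proof'' of \cite[Proposition 2.4]{lN14} invoked before the statement --- and your identity is precisely the computation the paper uses in Lemma \ref{L:d-closed} to prove that $\A^d$ is $d$-closed, so your write-up is self-contained where the paper's is not. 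The only other, cosmetic, divergence is surjectivity: you quote the description $E(S(\A))=\left\{ \ell_p^\A \ | \ p\in S(\B) \right\}$ recorded after Fact \ref{F:lmap}, whereas the paper re-derives it inside the proof by noting that the image of $\Lambda$ is compact, hence closed, contains every $\ell_g$, and is contained in $E(S(\A))$ by Fact \ref{F:lmap}. Both routes are valid; yours has the small but real advantage that the claim ``epimorphism of Ellis semigroups'' is fully verified rather than partly inherited from the literature.
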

	\begin{proof} We first prove that $\Lambda: S(\B)\to S(\A)^{S(\A)}$ is continuous, where $S(\A)^{S(\A)}$ is equipped with the product topology. It suffices to show that the map $S(\B)\rightarrow S(\A)$ given by $p\mapsto \ell_p^\A (q)$ is continuous for every $q\in S(\A)$. So, let $X\in \A$, consider the basic open set $[X]$ and set $U\subset S(\B)$ to be its preimage  under this map. Note that for $p\in S(\B)$ we have 
	\[
	p\in U \ \Leftrightarrow \ \ell_p^\A (q)\in [X]  \ \Leftrightarrow \ X\in \ell_p^\A (q) \ \Leftrightarrow \ d_qX \in p \ \Leftrightarrow \ p\in [d_qX]  .
	\]
	Since $d_qX\in \A^d\subset B$, we deduce that $U$ is a (basic) open set, showing that the map $p\mapsto \ell_p^\A (q)$ is continuous, as desired.
	
	Once we have seen that $\Lambda: S(\B)\to S(\A)^{S(\A)}$ is continuous, we obtain that its image $\mathrm{im}(\Lambda)$ is a closed subset of $S(\A)^{S(\A)}$. So, as clearly $\ell_g\in \mathrm{im}(\Lambda)$ for $g\in G$, we deduce that $E(S(\A))\subset \mathrm{im}(\Lambda)$. Moreover, we get the equality by Fact \ref{F:lmap}. Therefore, we have shown that $\Lambda : S(\B) \to E(S(\A))$ is an epimorphism of Ellis semigroups.
	
 	For the second part of the statement it is enough to prove that $\Lambda$ is injective, as $S(\A^d)$ is compact and $E(S(\A))$ Hausdorff. Let $p_1,p_2\in S(\A^d)$ be  two distinct ultrafilters. So, there is some set $Y\in \A^d$ such that $Y\in p_1$ but $Y\not \in p_2$, which we may assume to be of the form $d_qX$ for some $X\in\A$ and $q\in S(\A)$. Note that
 	\[
 	Y\in p_i \ \Leftrightarrow \ d_qX\in p_i \ \Leftrightarrow \ X\in \ell_{p_i}^\A(q). 
 	\]
 	So, we obtain $\ell_{p_1}^\A \neq \ell_{p_2}^\A$ and hence the map $\Lambda$ is injective. 
 	\end{proof}
 	
	We finish the section by pointing out that $\A^d$ is the unique Boolean subalgebra that yields a natural isomorphism between $(S(\A^d),*)$ and $(E(S(\A)),\circ)$.
 	\begin{cor}\label{C:Equivalence}
 	Let $\A\subset \B$ be two left-invariant Boolean subalgebras of $\mathcal P(G)$ such that  $\B$ is $d$-closed. The following are equivalent:
 	\begin{enumerate}
 		\item The natural restriction $r:S(\B)\twoheadrightarrow S(\A^d)$ is an homeomorphism.
 		\item The map $\Lambda:S(\B)\to E(S(\A))$, given by $\Lambda(p)=\ell_p^\A$, is an isomorphism of Ellis semigroups.
 		\item It holds that $\B=\A^d$.
 	\end{enumerate}
 	\end{cor}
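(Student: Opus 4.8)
The plan is to prove the cyclic chain $(3)\Rightarrow(2)\Rightarrow(1)\Rightarrow(3)$. The first implication needs no new work: if $\B=\A^d$, then since $\A^d$ is $d$-closed by Lemma \ref{L:d-closed}, the assertion that $\Lambda$ is an isomorphism of Ellis semigroups is exactly the final clause of Proposition \ref{P:Epi}. The substance therefore lies in the other two implications.

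For $(2)\Rightarrow(1)$, the first thing I would record is that $\Lambda$ factors through the restriction map $r$. Writing $\bar p:=r(p)=p\cap\A^d$ for $p\in S(\B)$, the value $\ell_p^\A(q)=\{X\in\A \mid d_qX\in p\}$ depends only on $\bar p$, because every $d_qX$ lies in $\A^d$; hence $\ell_p^\A=\ell_{\bar p}^\A$ and so $\Lambda=\Lambda'\circ r$, where $\Lambda'\colon S(\A^d)\to E(S(\A))$ is given by $\bar p\mapsto\ell_{\bar p}^\A$. By Proposition \ref{P:Epi} applied with $\A^d$ in the role of $\B$ (valid since $\A^d$ is $d$-closed by Lemma \ref{L:d-closed}), the map $\Lambda'$ is an isomorphism of Ellis semigroups, in particular a homeomorphism. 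Assuming $(2)$, that $\Lambda$ is an isomorphism, I would then conclude that $r=(\Lambda')^{-1}\circ\Lambda$ is a composite of homeomorphisms, hence itself a homeomorphism, which is $(1)$.

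The implication $(1)\Rightarrow(3)$ is where the real work sits; it amounts to the statement that the Stone dual $r$ of the inclusion $\A^d\hookrightarrow\B$ is a homeomorphism only when that inclusion is onto. I would argue by contraposition. Supposing $\A^d\subsetneq\B$, I pick $Y\in\B\setminus\A^d$ and aim to produce two distinct ultrafilters of $\B$ with the same restriction to $\A^d$, contradicting injectivity of $r$ (note that $r$ is automatically a continuous surjection, so it is a homeomorphism precisely when it is injective). The crux is to find an ultrafilter $F$ of $\A^d$ that does not decide $Y$, meaning that both $F\cup\{Y\}$ and $F\cup\{\neg Y\}$ have the finite intersection property in $\B$; such an $F$ extends to ultrafilters $p_1\ni Y$ and $p_2\ni\neg Y$ with $r(p_1)=F=r(p_2)$ and $p_1\neq p_2$. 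To locate $F$, I would consider the ideal $J$ of $\A^d$ generated by $\{Z\in\A^d \mid Z\le Y \text{ or } Z\le\neg Y\}$: an ultrafilter of $\A^d$ disjoint from $J$ is precisely one with the desired non-deciding property, and such an ultrafilter exists by the Boolean prime ideal theorem as soon as $J$ is proper. The main obstacle, and the only genuinely delicate point, is verifying that $J$ is proper: were $1\in J$, we could write $1=Z^+\vee Z^-$ with $Z^+\le Y$ and $Z^-\le\neg Y$ in $\B$, and then $Z^+\vee Z^-=1$ gives $\neg Z^+\le Z^-\le\neg Y$, forcing $Y\le Z^+$ and hence $Z^+=Y\in\A^d$, contradicting the choice of $Y$. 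This closes the cycle. Alternatively, $(1)\Leftrightarrow(3)$ follows at once from Stone duality, since $r$ is dual to the inclusion $\A^d\hookrightarrow\B$ and the duality reflects isomorphisms; the argument above merely makes this concrete.
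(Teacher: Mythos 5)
Your proposal is correct, and for the key implication it takes a genuinely different route from the paper. The implication $(3)\Rightarrow(2)$ is handled identically (the final clause of Proposition \ref{P:Epi}), and your $(2)\Rightarrow(1)$ — factoring $\Lambda=\Lambda'\circ r$ with $\Lambda':S(\A^d)\to E(S(\A))$ an isomorphism by Proposition \ref{P:Epi} and Lemma \ref{L:d-closed} — is essentially the paper's computation showing $\Lambda(p)=\ell_{r(p)}^\A$; the only cosmetic difference is that the paper deduces injectivity of $r$ directly from injectivity of $\Lambda$, whereas you compose with $(\Lambda')^{-1}$. The real divergence is in linking $(1)$ and $(3)$: the paper proves $(1)\Rightarrow(3)$ directly by a compactness argument, observing that if $r$ is a homeomorphism then each basic clopen set $[Y]$ of $S(\B)$ is a finite union of sets $[X]$ with $X\in\A^d$, whence $Y\in\A^d$; you instead prove the contrapositive, taking $Y\in\B\setminus\A^d$ and manufacturing two distinct ultrafilters of $\B$ with the same restriction to $\A^d$, via an ultrafilter of $\A^d$ disjoint from the ideal generated by $\left\{ Z\in\A^d \ | \ Z\le Y \text{ or } Z\le \neg Y\right\}$. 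Your verification that this ideal is proper is correct (grouping the generators yields $Z^+\le Y\le Z^+$ with $Z^+\in\A^d$, a contradiction), and the extension step works because $r(p_i)$ is an ultrafilter of $\A^d$ containing the ultrafilter $F$, hence equal to it. What each approach buys: the paper's argument is shorter and purely topological, exploiting that continuous bijections from compact spaces to Hausdorff spaces are homeomorphisms together with compactness of clopen sets; yours makes the Stone-duality content explicit, exhibiting concretely the failure of injectivity of $r$ when $\A^d\subsetneq\B$, at the cost of an extra Boolean-algebraic verification via the prime ideal theorem. Both are complete proofs.
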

  	\begin{proof} Notice first that the natural restriction $r:p\mapsto \{X\in\A^d \ | \ X\in p\}$ is well-defined, continuous and surjective. If in addition $r$ is an homeomorphism, since any basic open set in $S(\B)$ is also closed, we deduce that such a basic open set is a finite union of basic open subsets of $S(\B)$ of the form $[X]$ for $X\in \A^d$. In particular, it follows that $\B=\A^d$. Hence, we obtain the equivalence between (1) and (3). Also, condition (3) implies (2) by Proposition \ref{P:Epi}. To prove that (2) implies (1), note that $\Lambda :S(\B)\to E(S(\A))$ is $\Lambda(p) = \ell_{r(p)}^\A$. Indeed, for $X\in \A$ and $q\in S(\A)$ we have
  	\[
  	X\in \ell_{r(p)}^\A(q) \ \Leftrightarrow \ d_qX \in r(p)  \ \Leftrightarrow \ r(p)\in [d_qX]_{\A^d} \ \Leftrightarrow \ p\in [d_qX]_{\B} \ \Leftrightarrow \ d_qX \in p. 
  	\]	
  	So, the map $r$ is injective and hence an homeomorphism.  		
 	\end{proof}

	\section{Translations by external elements}\label{sec:3}
		
	Here we use the set-up established in the previous section to study the Ellis enveloping semigroup of spaces of types, following the approach of \cite[Section 2]{lN12}.

	Fix a $\kappa$-saturated structure $\bar M$ in a language $\LL$ and let $M$ be an elementary substructure with $|M|<\kappa$. Let $G=G(\bar M)\subset \bar M^n$ be an $M$-definable group and write $G(M)$ for $G\cap M^n$. Let $\mathrm{Def}_G(M)$ be the Boolean algebra of all $M$-definable subsets of $G(M)\subset M^n$. We denote by $S_G(M)$ the Stone space of $\mathrm{Def}_G(M)$, that is, the space of ultrafilters of $M$-definable subsets of $G(M)$. Using the correspondence between formulas and definable sets, we can identify an ultrafilter $n$-type over $M$ with an $n$-type ({\it i.e.} a maximal consistent set of $\LL_M$-formulas concentrating on $G$). 
	
	We recall the notion of externally definable subset. 
	
	\begin{definition}\label{def:extdef}
	We say that $X\subset G(M)$ is {\em externally definable} if there exists some $\bar M$-definable set $Y\subset G$  such that $X=Y\cap G(M)$.
	\end{definition}

	 We denote the collection of all externally definable subsets of $G(M)$ by $\Def_G^{\rm ext}(M)$ and we write $S_G^{\rm ext}(M)$ to denote its Stone space. Newelski \cite[Lemma 1.3]{lN12} proved that the Boolean algebra of externally definable subsets of $G(M)$, which is clearly left-invariant under translates of $G(M)$, is $d$-closed.

	Bearing in mind the results from the previous section, we refine the notion of externally definable. We introduce the following concept:
	
	\begin{definition}
We say that a subset $X\subset G(M)$ is {\em $d$-definable} if there is some $M$-definable set $Y\subset G$ and some $h\in G$ such that $X=Yh\cap G(M)$. 
	\end{definition}
It is clear that every $M$-definable subset of $G(M)$ is $d$-definable, and also that every $d$-definable is externally definable. Furthermore, we have the following:

\begin{lemma}\label{L:ddefinable}
	Let $X$ be a subset of $G(M)$. The set $X$ is $d$-definable if and only if $X=d_q(Z\cap M)$ for some $M$-definable subset $Z\subset G$ and some $q\in S_G(M)$. In particular, the Boolean algebra generated by all $d$-definable subsets of $G(M)$ is $\mathrm{Def}_{G}(M)^d$.
\end{lemma}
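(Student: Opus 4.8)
The plan is to move between the two descriptions of a $d$-definable set---as an external translate $Yh\cap G(M)$ of an $M$-definable set, and as the image of the operator $d_q$---by realising the type $q$ inside the group. Since $\bar M$ is $\kappa$-saturated and $|M|<\kappa$, every $q\in S_G(M)$ is realised by some $b\in G(\bar M)=G$. The crux is the identity that, for an $M$-definable $Y\subseteq G$ and $b\models q$, one has for $g\in G(M)$
\[
g\in d_q Y \ \Leftrightarrow\ g\inv Y\in q \ \Leftrightarrow\ b\in g\inv Y \ \Leftrightarrow\ gb\in Y \ \Leftrightarrow\ g\in Yb\inv,
\]
so that $d_q Y=Yb\inv\cap G(M)$. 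The computation is legitimate because $g\in G(M)$ forces $g\inv Y$ to be $M$-definable, so that the ultrafilter $q$ actually decides it; note also that it genuinely uses the full definable set $Y\subseteq G$ rather than its trace on $G(M)$, which is precisely why $d_q Y$ is typically nonempty even for external $b$. Throughout, the expression $d_q X$ in the statement is to be read as $d_q$ applied to the $M$-definable set $Y$ underlying $X$, which by $M\prec\bar M$ is determined by its trace.

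I would then read off the two implications from this identity. For the forward direction, suppose $X=Yh\cap G(M)$ with $Y$ $M$-definable and $h\in G$. Setting $b:=h\inv$ and $q:=\tp(h\inv/M)\in S_G(M)$ (a type of $S_G(M)$ since $h\inv\in G$), the identity yields $d_q Y=Yh\cap G(M)=X$, giving the required representation $X=d_qX\cap M$. Conversely, if $X=d_qX\cap M$ for some $q\in S_G(M)$, pick $b\models q$; then $X=Yb\inv\cap G(M)$ exhibits $X$ as a translate of the $M$-definable set $Y$ by the element $b\inv\in G$, so $X$ is $d$-definable.

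Finally, the last assertion is a formal consequence. By definition $\mathrm{Def}_G(M)^d$ is the Boolean subalgebra generated by the sets $d_qY$ with $q\in S_G(M)$ and $Y\in\mathrm{Def}_G(M)$, and the two implications above identify this generating family with the family of all $d$-definable subsets of $G(M)$; hence the Boolean algebra they generate is exactly $\mathrm{Def}_G(M)^d$. The only real difficulty is bookkeeping rather than conceptual: one must keep the translation conventions consistent (the set $g\inv Y$ is a left translate, whereas the external shift $Yb\inv$ acts on the right) and check at each step that the sets fed to $q$ lie in $\mathrm{Def}_G(M)$, so that the ultrafilter truly decides them.
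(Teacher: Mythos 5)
Your proof is correct and takes essentially the same route as the paper: the paper proves $Yh\cap G(M)=d_q(Y\cap M)$ for $q=\tp(h^{-1}/M)$, which is exactly your key identity $d_qY=Yb^{-1}\cap G(M)$ with $b=h^{-1}$, and it resolves the notation $d_qX$ the same way you do, by applying $d_q$ to an $M$-definable set and concluding that the $d$-definable sets are precisely those of the form $d_qZ\cap M$ with $Z\in\mathrm{Def}_G(M)$. The only cosmetic difference is that you invoke saturation explicitly to realize $q$, a step the paper leaves implicit.
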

\begin{proof}
Given an $M$-definable subset $Y\subset G$ and an arbitrary element $h\in G$ we prove that $Yh\cap G(M)=d_q( Y\cap M)$ for  $q\in S_G(M)$ the ultrafilter corresponding to $\tp(h^{-1}/M)$. Indeed, we have for $g\in G(M)$ that
\begin{align*}
 g\in d_q (Y\cap M) \  \Leftrightarrow \ g\inv (Y\cap M)\in q   \ \Leftrightarrow \  h^{-1}\in g\inv Y \ \Leftrightarrow \  g\in Yh \cap G(M).
\end{align*}
This yields that the $d$-definable sets are precisely the sets of the form $d_q(Z\cap M)$ for some $q\in S_G(M)$ and some $M$-definable subset $Z\subset G$. So, we deduce that the Boolean algebra generated by all $d$-definable subsets of $G(M)$ is $\mathrm{Def}_G(M)^d$.
\end{proof}

As a consequence, it follows from Lemma \ref{L:d-closed} that the Boolean algebra generated by the $d$-definable subsets of $G(M)$ is left-invariant and $d$-closed. Hence, combining this with the previous section, we obtain that 
\[
\Lambda : S_G^{\rm ext}(M) \to E(S_G(M)) , \ p\mapsto \ell_p
\]
is always an epimorphism of Ellis semigroups, by Proposition \ref{P:Epi}. Furthermore, we have:
	 
\begin{cor}\label{C:Equiv-Ellis-ext}
The function $\Lambda : S_G^{\rm ext}(M) \to E(S_G(M))$, given by $p\mapsto \ell_p$, is an isomorphism of Ellis semigroups if and only if every externally definable subset of $G(M)$ is a (positive) Boolean combination of $d$-definable sets.
\end{cor}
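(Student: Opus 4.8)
The plan is to obtain the corollary as a direct instance of Corollary \ref{C:Equivalence}, applied to the pair of Boolean algebras $\A := \mathrm{Def}_G(M)$ and $\B := \Def_G^{\rm ext}(M)$. First I would verify the three hypotheses of that corollary. Both algebras are left-invariant under translation by $G(M)$; the inclusion $\A \subset \B$ holds because every $M$-definable subset of $G(M)$ is externally definable; and $\B$ is $d$-closed by Newelski's result \cite[Lemma 1.3]{lN12} recalled just before Definition \ref{def:extdef}. Under these hypotheses, Corollary \ref{C:Equivalence} asserts that the map $\Lambda : S(\B) \to E(S(\A))$ is an isomorphism of Ellis semigroups precisely when $\B = \A^d$. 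Since $S(\B) = S_G^{\rm ext}(M)$, $S(\A) = S_G(M)$, and the map $\Lambda$ here coincides with the one in the statement, this reduces the corollary to translating the identity $\B = \A^d$ into the asserted combinatorial condition.

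To carry out that translation, recall from Lemma \ref{L:ddefinable} that $\A^d = \mathrm{Def}_G(M)^d$ is exactly the Boolean algebra generated by the $d$-definable subsets of $G(M)$. Because $\B$ is $d$-closed and contains $\A$, Remark \ref{rmk:Adleft}(2) already gives the inclusion $\A^d \subset \B$; hence the equality $\B = \A^d$ is equivalent to the reverse inclusion $\B \subset \A^d$. Spelled out, $\B \subset \A^d$ says precisely that every externally definable subset of $G(M)$ lies in the Boolean algebra generated by the $d$-definable sets, that is, is a Boolean combination of $d$-definable sets. Chaining this with the previous paragraph yields the stated equivalence, apart from the parenthetical word \emph{positive}.

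The only genuinely new point, and the step I would treat with slight care, is the claim that one may always take the Boolean combination to be positive. This rests on observing that the family of $d$-definable sets is itself closed under complementation inside $G(M)$: if $X = Yh \cap G(M)$ with $Y$ an $M$-definable subset of $G$ and $h \in G$, then its complement in $G(M)$ is $(G \setminus Y)h \cap G(M)$, which is again $d$-definable since $G \setminus Y$ is $M$-definable. Given this closure under complements, any Boolean combination of $d$-definable sets can be rewritten, by pushing negations inward via De Morgan's laws, as a positive Boolean combination of $d$-definable sets, so the two formulations coincide.

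I do not expect a serious obstacle in any of these steps; the substantive content of the corollary is really the abstract machinery of Section \ref{sec:2}, and the argument above is essentially a bookkeeping exercise identifying the correct instance of Corollary \ref{C:Equivalence} together with the elementary remark on complements. The one place where a reader might want explicit justification is the passage from $\B = \A^d$ to the membership statement, so I would make the role of Lemma \ref{L:ddefinable} and Remark \ref{rmk:Adleft}(2) fully explicit there.
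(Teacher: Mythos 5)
Your proposal is correct and takes essentially the same route as the paper: the paper's own (much terser) proof likewise observes that complements of $d$-definable sets are $d$-definable, so every Boolean combination is positive, and then invokes exactly Lemma \ref{L:ddefinable} together with Corollary \ref{C:Equivalence}. Your write-up merely makes explicit the bookkeeping the paper leaves implicit, namely the verification of the hypotheses of Corollary \ref{C:Equivalence} and the use of Remark \ref{rmk:Adleft}(2) to reduce $\B=\A^d$ to the inclusion $\B\subset\A^d$.
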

\begin{proof} Observe first that the complement of a $d$-definable set is again $d$-definable. So, a Boolean combination of $d$-definable sets is indeed a positive Boolean combination. Hence, we obtain the result by Lemma \ref{L:ddefinable} and Corollary \ref{C:Equivalence}.
\end{proof}

	\section{Ellis envelopes in o-minimal structures}\label{sec:4} Fix an $o$-minimal structure $M$ and let $G$ be a definable group. Throughout the section, we shall work within an $|M|^+$-saturated elementary extension $\bar M$ of $M$.

\subsection{One-dimensional groups} In this setting, we state and prove the following: 
	
	\begin{theorem}\label{T:1dim}
		Let $G$ be a definably connected one-dimensional definable group. Then $(S^{\rm ext}_G(M),*)$ and $(E(S_G(M)),\circ)$ are naturally isomorphic as Ellis semigroups.
	\end{theorem}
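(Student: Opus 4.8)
The plan is to invoke the characterisation of Corollary \ref{C:Equiv-Ellis-ext}: the natural map $\Lambda\colon S^{\rm ext}_G(M)\to E(S_G(M))$ is an isomorphism of Ellis semigroups precisely when every externally definable subset of $G(M)$ is a Boolean combination of $d$-definable sets, so I would reduce the theorem to proving this combinatorial statement. Fix an externally definable $X=Y\cap G(M)$ with $Y\subseteq G$ an $\bar M$-definable set. Since $\dim G=1$, o-minimality (cell decomposition applied to the one-dimensional definable set $G$) gives that $Y$ is a finite union of points and open intervals with respect to the natural definable order on $G$ — or circular order, when $G$ is definably compact. Intersecting with $G(M)$, the points contribute finitely many elements of $G(M)$, each $M$-definable hence $d$-definable, while each interval contributes a convex set (an arc) of $G(M)$. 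Thus $X$ is a finite union of convex subsets of $G(M)$, and it remains to express each such convex set as a Boolean combination of $d$-definable sets. The key point — and the reason the group operation enters — is that the endpoints of these convex sets may be external, and translation by an external element is exactly what realises such endpoints.

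I would then recall that a definably connected one-dimensional group $G$ is abelian and divisible, and that right-translations form a definably connected family of order-homeomorphisms through the identity, hence preserve the (circular) order. \textbf{Case 1 (not definably compact).} Here $G$ is linearly ordered, and for the $M$-definable half-lines $L^-=\{x: x<1_G\}$ and $L^+=\{x: x>1_G\}$ one has $L^-h\cap G(M)=\{g\in G(M): g<h\}$ and $L^+h\cap G(M)=\{g\in G(M): g>h\}$ for every $h\in G$, since right-translation by $h$ is an order-isomorphism sending $1_G$ to $h$. Hence every half-line with an arbitrary, possibly external, endpoint is $d$-definable, and any convex set $\{g: \alpha<g<\beta\}$ is the intersection of two such, so it is a Boolean combination of $d$-definable sets.

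\textbf{Case 2 (definably compact).} By divisibility there is a unique element $s$ of order two; it is $\emptyset$-definable, so $s\in M$, and $H=\{x: 1_G<x<s\}$ is an $M$-definable half-circle. For $h\in G$ the translate $Hh$ is the half-circle arc starting at $h$, which is $d$-definable. Given an arc $A$ with (possibly external) endpoints $\gamma_1,\gamma_2$ of length strictly less than one half, a direct computation shows $A=\bigl(H\gamma_1\cap G(M)\bigr)\cap\bigl(H(\gamma_2 s^{-1})\cap G(M)\bigr)$, since $H\gamma_1$ is the half-circle $(\gamma_1,\gamma_1 s)$ and $H(\gamma_2 s^{-1})$ is the half-circle $(\gamma_2 s^{-1},\gamma_2)$ ending at $\gamma_2$ (recall $s^{-1}=s$); this exhibits $A$ as an intersection of two $d$-definable sets. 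Every arc of length exactly one half is itself of the form $Hh$, and every longer arc is the complement of a shorter one, so in all cases arcs, and hence finite unions of arcs, are Boolean combinations of $d$-definable sets. Combining the two cases with Corollary \ref{C:Equiv-Ellis-ext} yields the natural isomorphism.

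I expect the main obstacle to be the definably compact case: on a circle there is no analogue of a half-line, so an external endpoint cannot be captured by a single translate, and one is forced to cut out arcs as intersections of two suitably translated half-circles. This is where divisibility (to produce the canonical half-circle $H$ over $M$) and the orientation-preservation of translations are essential, and where the length bookkeeping for the intersection computation must be carried out carefully.
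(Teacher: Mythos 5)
Your proposal follows essentially the same route as the paper's proof: reduce via Corollary \ref{C:Equiv-Ellis-ext} to showing every externally definable subset of $G(M)$ is a Boolean combination of $d$-definable sets, invoke the order/circular-order structure theory of definably connected one-dimensional groups from \cite{vR91}, treat the non-compact case by translated half-lines, and treat the definably compact case by intersecting two translates of the half-circle cut out by the unique element of order two --- your identity $A=\bigl(H\gamma_1\cap G(M)\bigr)\cap\bigl(H(\gamma_2 s)\cap G(M)\bigr)$ is exactly the paper's decomposition $R(0,M,a)=R\left(\tfrac{1}{2}+a,M,a\right)\cap R\left(0,M,\tfrac{1}{2}\right)$. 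The only cosmetic differences are that the paper handles arcs longer than a half-circle via the negation map $x\mapsto -x$ rather than by complementation, and that existence and uniqueness of the $2$-torsion element $s$ follows from \cite[Proposition 4]{vR91} (not from divisibility alone, which gives no $2$-torsion in general).
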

	\begin{proof}
		Note first that since $G$ is definably connected, it is commutative by \cite[Corollary 2.15]{aP88}. So, along the proof we use additive notation.
		
		By Corollary \ref{C:Equiv-Ellis-ext} it suffices to show that every externally definable subset of $G(M)$ is a Boolean combination of $d$-definable sets. So, fix an $\LL_{\bar M}$-formula  $\psi(x)$ concentrated on $G$. To prove that $\psi(M)$ is a Boolean combination of $d$-definable sets, we will heavily rely on the results from \cite{vR91}. We distinguish two cases:
		
		\noindent {\em Case 1}. Suppose that $G$ is not definably compact; thus $G$ is of $\mathbb R$-type in the terminology from \cite{vR91}. In this case, there is a definable linear order $<_G$ on $G$ such that $(G,+,<_G)$ is an ordered divisible torsion-free abelian group and $<_G$ is dense and without endpoints. Moreover, every definable subset of $G$ is a finite union of $<_G$-intervals and points. Hence, we may assume without loss of generality that $\psi(\bar M)$ is an $<_G$-interval or a point.  Since a point is clearly $d$-definable, we can assume that $\psi(\bar M)$ is an $<_G$-interval of the form $(-\infty,b)=\{x\in G \ | \ x<_G b\}$ or $(a,+\infty)$, which is defined likewise. It then follows that either   
		\[
		\psi(M)=\big((-\infty,0)+b\big)\cap G(M) \ \text{ or } \ \psi(M)=\big((0,+\infty)+a\big)\cap G(M).
		\] 
		In both cases the externally definable set is $d$-definable, as desired.
		
		\noindent {\em Case 2}. Suppose that $G$ is definably compact; thus $G$ is of $\mathbb S^1$-type according to \cite{vR91}. In this case, there exists a definable circular order $R(x,y,z)$ on $G$ such that 
		\begin{enumerate}[i)]
			\item $R(x,y,z)$ implies that $x,y,z$ are distinct,
			\item $R(x,y,z)$ implies that $R(y,z,x)$ and $R(-z,-y,-x)$  hold,
			\item $R(x,y,z)$ implies that $R(x+u,y+u,z+u)$ holds for any $u\in G$.
		\end{enumerate}
		Moreover, for any $x_0\in G$, the relation $R(x_0,y,z)$ defines a dense linear order without endpoints in $G\setminus\{x_0\}$. We write $<_{x_0}$ for this binary relation.  In addition, every definable subset of $G\setminus \{x_0\}$ is a finite union of $<_{x_0}$-intervals and points. In particular, a definably connected definable proper subset of $G$ is of the form
		$R(a,\bar M,b)$ for some $a,b\in G$.
		
		Clearly, we may assume that $\psi(M)\subsetneq G(M)$, as otherwise there is nothing to prove. Thus, after translating by an element of $G(M)$ if necessary, we have that $0\in G(M) \setminus \psi(M)$, so we can assume that $\psi(M)$ is of the form $R(0,M,a)$, with $a\in \bar M \setminus M$. By \cite[Proposition 4]{vR91}, there is only one element of order $2$ in $G$. Denote it by $\frac{1}{2}$ and note that $\frac{1}{2}\in G(M)$, so $a\neq \frac{1}{2}$. We consider two subcases:
		
		\noindent {\em Case 2.1}. If $R(0,a,\frac{1}{2})$, then $R(\frac{1}{2},a+\frac{1}{2},0)$ by iii) adding $u=\frac{1}{2}$, so $R(0,\frac{1}{2},a+\frac{1}{2})$ by ii). Thus, we get $R(0,a,a+\frac{1}{2})$ by transitivity of $<_0$. We claim that
			\[
			R(0,M,a) = R\left(\frac{1}{2}+a, M,a\right) \cap R\left(0,M,\frac{1}{2}\right) ,
			\]
			which yields the result since $\big(R(\frac{1}{2},\bar M,0)+a \big)\cap M = R(\frac{1}{2}+a, M,a)$ is  $d$-definable. Let $x\in G(M)$ be such that $R(0,x,a)$; note then that $R(0,x,\frac{1}{2})$ by transitivity of $<_{0}$ since $R(0,a,\frac{1}{2})$. 
			 On the other hand, as $R(0,a,a+\frac{1}{2})$, we have $R(a,a+\frac{1}{2},0)$ by ii). Combining this with $R(a,0,x)$, we get by transitivity of $<_a$ that $R(a,a+\frac{1}{2},x)$ and hence $R(a+\frac{1}{2},x,a)$, as desired. 
			
			For the right to left inclusion, let $x\in G(M)$ be such that $R(\frac{1}{2}+a, x,a)$ and $R(0,x,\frac{1}{2})$. Note first that $R(0,x,\frac{1}{2})$ and $R(0,\frac{1}{2},a+\frac{1}{2})$ implies $R(0,x,a+\frac{1}{2})$ by transitivity of $<_0$. Thus $R(x,a+\frac{1}{2},0)$ by ii). As $R(a+\frac{1}{2},x,a)$ implies $R(x,a,a+\frac{1}{2})$ by ii), we get that $R(x,a,0)$ by transitivity of $<_x$. This yields that $R(0,x,a)$, as claimed.
		
	\noindent {\em Case 2.2}. If $R(0,\frac{1}{2},a)$, then $R(0,-a,\frac{1}{2})$ by $ii)$, since $\frac{1}{2}$ has order $2$. So, the set $R(0,M,-a)$ is a Boolean combination of $d$-definable sets by Case 2.1. Now, observe using ii) that
			\[
			R(0,x,-a) \text{ holds} \ \Leftrightarrow \ R(a,-x,0) \text{ holds}.
			\]
			Hence, we deduce that $R(a,M,0) = - R(0,M,-a)$, which yields that \[G(M)\setminus R(a,M,0) = R(a,0,M) \cup \{0\}\] is also a Boolean combination of $d$-definable sets. Consequently, so is  $R(0,M,a)$.					
		This finishes the proof. 
	\end{proof}

\subsection{Arbitrary dimension in an $\aleph_0$-saturated model} 	The statement does not generalise to arbitrary dimensions, not even in the semialgebraic case. Our next goal is to prove the following.

\begin{theorem}\label{main} Let $M$ be a $\aleph_0$-saturated o-minimal expansion of a real closed field, and let $G=(M^2,+)$. Then, the continuous function $\Lambda : S_G^{\rm ext}(M) \to E(S_G(M))$ is not an isomorphism of Ellis semigroups. 	
\end{theorem}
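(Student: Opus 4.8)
By Corollary \ref{C:Equiv-Ellis-ext}, it suffices to produce an externally definable subset of $G(M)=M^2$ that is not a (positive) Boolean combination of $d$-definable sets. Using that $M$ is $\aleph_0$-saturated and a proper elementary substructure of $\bar M$, the plan is to fix an external slope $\alpha\in\bar M\setminus M$ whose cut over $M$ is \emph{generic}: $\alpha$ is $M$-finite (bounded by an element of $M$) but not infinitesimally close to any $m\in M$, i.e. no $\alpha-m$ is infinitesimal. I then set
\[
X:=\{(x,y)\in M^2\ | \ y<\alpha x\},
\]
which is externally definable by construction. I would first record why $X$ is not already $d$-definable: a translate $Y+h$ of an $M$-definable $Y$ with trace $X$ on $M^2$ would force the half-plane $\{y<\alpha x+c_0\}$ to be $M$-definable, contradicting $\alpha\notin M$. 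The real content, of course, is that $X$ is not even a Boolean combination of such translates.

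Suppose toward a contradiction that $X=B\cap M^2$ with $B$ a Boolean combination of $d$-definable sets $D_i=(Y_i+h_i)\cap M^2$, the $Y_i$ being $M$-definable and $h_i=(h_{i,1},h_{i,2})\in\bar M^2$. The next step is to slice by the vertical lines $x=c$ with $c\in M^{>0}$. On such a line $X$ restricts to the cut $(-\infty,\alpha c)\cap M$, while $B$ restricts to a finite union of intervals of $M$ whose endpoints lie among the finitely many values $\phi_{i,j}(c-h_{i,1})+h_{i,2}$, where the $\phi_{i,j}$ are the finitely many $M$-definable cell-boundary functions describing the $Y_i$. Matching the two traces for every $c$ forces, for each $c$, some endpoint to be cut-equivalent to $\alpha c$ (no element of $M$ strictly between them, which I will abbreviate $\approx_M$), since $B_c\cap M$ must switch from ``in'' to ``out'' exactly at the cut determined by $\alpha c$. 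A pigeonhole over the finite index set then yields a single $M$-definable $\phi$ and external constants $h_1,h_2$ with
\[
\phi(c-h_1)+h_2\ \approx_M\ \alpha c
\]
for all $c$ in a \emph{cofinal} subset $C\subseteq M^{>0}$ — cofinal because a finite cover of the unbounded set $M^{>0}$ must have an unbounded member. Substituting $t=c-h_1$ rewrites this as $\phi(t)\approx_M\alpha t+\beta$ with $\beta=\alpha h_1-h_2$.

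The heart of the proof is then to contradict the existence of an $M$-definable $\phi$ whose graph cut-tracks the external line of slope $\alpha$ along a cofinal family of $M$-points. Here I would invoke the o-minimal monotonicity theorem in $\bar M$: after shrinking $C$ into one of the finitely many intervals on which the relevant piece is $C^1$ and monotone, the derivative $\phi'$ is $M$-definable and eventually monotone, and any definable limiting slope $\lambda$ of $\phi$ lies in $M\cup\{\pm\infty\}$ (since $\dcl(M)=M$). The genericity of $\alpha$ is exactly what forbids $\lambda\approx_M\alpha$: the asymptotic slope is an honest element of $M\cup\{\pm\infty\}$, hence separated from the cut of $\alpha$. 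Consequently the gap $\phi(c-h_1)+h_2-\alpha c$ must grow in a controlled, computable fashion as $c$ runs through the cofinal set $C$, and the quantitative estimates — comparing the $M$-definable growth of $\phi$ against the external linear growth of $\alpha c$ over arbitrarily large $c\in M$ — should produce a genuine $m\in M$ lying strictly between $\phi(c-h_1)+h_2$ and $\alpha c$ for some $c\in C$. Such an $m$ is an $M$-point on the wrong side of the boundary, contradicting $B\cap M^2=X$.

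The main obstacle, and the step that forces the ``specific bounds'' alluded to in the introduction, is precisely this last one. The relation $\approx_M$ is \emph{not} preserved under subtraction of external quantities, so one cannot simply subtract $\phi(c_2-h_1)+h_2\approx_M\alpha c_2$ from $\phi(c_1-h_1)+h_2\approx_M\alpha c_1$ and read off the slope $\alpha$; monotonicity alone only recovers that $\phi$ is increasing along $C$. Converting ``cut-tracking of a line of slope $\alpha$'' into an honest slope comparison inside $M$ is therefore where the careful choice of $\alpha$ and explicit estimates on the drift of $\phi(c-h_1)+h_2$ away from $\alpha c$ must be combined with the tameness (monotonicity and bounded oscillation) of the $M$-definable $\phi$. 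I would also treat separately the degenerate cases in which the active endpoint is $\pm\infty$ or in which the shifts $h_{i,\bullet}$ or the values $\phi_{i,j}(c-h_{i,1})$ are $M$-infinite, and I would verify that the pigeonhole indeed delivers a cofinal (not merely infinite) set of $c$, since the growth estimate requires arbitrarily large $c\in M$.
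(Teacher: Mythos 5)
Your reduction and slicing strategy is essentially the paper's (Corollary \ref{C:Equiv-Ellis-ext}, then the analogue of Proposition \ref{general}: slice along vertical lines and extract a single $M$-definable boundary function whose translate cut-tracks the external line), but the proposal stops exactly where the real work begins. The step you yourself flag as ``the main obstacle'' --- turning $\phi(c-h_1)+h_2\approx_M\alpha c$ on an unbounded set of $c$ into a contradiction --- is left as something that ``should produce'' an $M$-point on the wrong side; no argument is given, and this is precisely the content of Claims 2--4 and the final computation of Proposition \ref{ex}. The missing idea is concrete: since, as you note, $\approx_M$ does not survive subtraction of external quantities, the paper applies the mean value theorem to $g(x)=f(x+c)+d$ between two points $x_1<x_2$ \emph{of $M$}, so that the external shifts cancel in $g(x_2)-g(x_1)=f'(\hat{x}+c)\,(x_2-x_1)$; one then needs a bound on $f'$ at the nonstandard point $\hat{x}+c$, i.e.\ a bound valid on a whole $M$-definable interval $J$ and not merely on $J\cap M$. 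That uniformization is exactly where $\aleph_0$-saturation enters (Claim 4 of Proposition \ref{ex}), and it works because the chosen slope is a realization $e$ of $p_{\rm id}$, whose cut has upper edge coinitial with the countable family $\{1/n\}_{n\ge 1}$ (and satisfies $e<1$, which drives the final contradiction $x_1<e\cdot x_1$). Your $\alpha$ is only assumed to realize a non-definable cut; for a generic cut with uncountable cofinality and coinitiality on both sides the saturation step has no analogue, so the ``quantitative estimates'' you defer are not merely unfinished --- with your hypotheses on $\alpha$ they may simply be unavailable.

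A second, smaller gap: your pigeonhole only produces a \emph{cofinal} set $C$ of good slices, but the mean-value argument needs pairs $x_1<x_2$ of good points standing in a prescribed multiplicative relation (in the paper, $x_2=2x_1$ or $x_2=m^{-1}x_1$), which a cofinal set need not contain. The paper avoids this by observing that the sets of good parameters are externally definable (definable in the Shelah expansion, by quantifier elimination) and invoking Fact \ref{F:Convex}: a finite cover of $M_{>0}$ by finite unions of convex sets has a member containing a whole tail $M_{>a}$, and on a tail the required pairs exist. So you would need to record external definability of your sets of good $c$ and upgrade ``cofinal'' to ``a tail''; that fix is routine, but without it, and above all without the saturation/mean-value argument, the proposal does not amount to a proof.
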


In view of Corollary \ref{C:Equiv-Ellis-ext}, to prove the theorem we only need to find an externally definable subset of $M^2$ which is not a Boolean combination of $d$-definable subsets of $M^2$. To this purpose it will be convenient to work within the Shelah expansion of the structure, which we next recall.

Given a definable subset $Z\subset \bar M^n$, defined with parameters from $\bar M$, introduce an $n$-relational symbol $R_{Z}$ associated to $Z$. Define the structure $M^\mathrm{Sh}$ which expands $M$ together with these new relational symbols $R_Z$, where $R_Z$ is interpreted as the externally definable set $Z\cap M^n$. Shelah's expansion theorem \cite{sS09} ensures that the structure $M^\mathrm{Sh}$ has elimination of quantifiers, see also \cite{CS13,mZ10} for a different proof. In particular, given an externally definable set $Y\subset M^{n+1}$ and $x\in M^n$, the fiber of $Y$ over $x$ 
$$
Y_x:=\{y\in M \ | \ (x,y)\in Y\}
$$
is also externally definable, by Shelah's quantifier elimination theorem. In the o-minimal setting, quantifier elimination was first proven by Baisalov and Poizat \cite{BP98}. Furthermore, as a consequence they deduced that $M^\mathrm{Sh}$ is weakly o-minimal \cite[p. 577]{BP98}. In particular:

\begin{fact}\label{F:Convex}
Let $M$ be an o-minimal structure and let $X\subset M$ be an externally definable subset. Then $X$ is a finite disjoint union of convex sets. 
\end{fact}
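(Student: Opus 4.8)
The plan is to reduce the statement to the structure of one-variable definable sets in the o-minimal extension $\bar M$ and then intersect with $M$. First I would unwind the definition: since $X$ is externally definable, there is an $\bar M$-definable subset $Y\subseteq \bar M$ with $X=Y\cap M$. Because o-minimality is preserved under elementary equivalence (Knight--Pillay--Steinhorn), the elementary extension $\bar M$ of $M$ is again o-minimal. Hence $Y$ is a finite union $C_1\cup\cdots\cup C_k$, where each $C_i$ is either a singleton or an open interval with endpoints in $\bar M\cup\{\pm\infty\}$; in particular each $C_i$ is a convex subset of $\bar M$ with respect to the linear order.

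The key observation is that convexity descends to the substructure: if $C\subseteq \bar M$ is convex, then $C\cap M$ is a convex subset of $M$, since whenever $a<c<b$ with $a,b\in C\cap M$ and $c\in M$, convexity of $C$ in $\bar M$ gives $c\in C$, hence $c\in C\cap M$. Applying this to each $C_i$ yields
\[
X=Y\cap M=\bigcup_{i=1}^{k}\,(C_i\cap M),
\]
exhibiting $X$ as a finite union of convex subsets of $M$.

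Finally, I would pass from a finite union to a finite \emph{disjoint} union by taking maximal convex blocks. For $x\in X$, let $B_x$ be the union of all convex subsets of $X$ containing $x$; as a union of convex sets sharing the point $x$ it is itself convex, and it is the largest convex subset of $X$ through $x$. Any two such blocks are either equal or disjoint, and together they cover $X$. Since each nonempty $C_i\cap M$ is a convex subset of $X$, it is contained in a single block; consequently every block is a union of some of the sets $C_i\cap M$, so there are at most $k$ distinct blocks. This realises $X$ as a finite disjoint union of convex sets, as desired.

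I expect no serious obstacle here. The entire content is the o-minimality of $\bar M$ together with the elementary fact that intersecting with $M$ preserves convexity; the disjointification is purely combinatorial and the bound on the number of pieces is automatic. The only point meriting an explicit word of justification is that the saturated extension $\bar M$ inherits o-minimality from $M$, which is a standard transfer result; everything else is routine.
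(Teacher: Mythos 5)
Your proof is correct, but it takes a genuinely different route from the paper: the paper does not prove this Fact at all, it quotes it as a special case of the Baisalov--Poizat theorem that the Shelah expansion $M^{\mathrm{Sh}}$ is weakly o-minimal, which in turn rests on their quantifier elimination for $M^{\mathrm{Sh}}$. Your argument is instead direct and self-contained: write $X=Y\cap M$ with $Y$ definable in $\bar M$, use Knight--Pillay--Steinhorn to see that $\bar M$ is o-minimal, decompose $Y$ into finitely many points and open intervals, note that intersecting a convex subset of $\bar M$ with $M$ gives a convex subset of $M$, and then disjointify via maximal convex blocks. All the steps are sound; in particular the disjointification works because a union of convex sets sharing a point is convex, so the blocks are well defined, pairwise equal or disjoint, and each $C_i\cap M$ lies inside a single block, which bounds the number of blocks by $k$. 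What your approach buys is elementarity: for a single trace set in one variable nothing beyond o-minimality of $\bar M$ is needed, so the Fact as literally stated comes for free. What the paper's citation buys is the stronger statement it actually uses later: in Proposition \ref{general} and Proposition \ref{ex} the Fact is applied to sets such as $\pi(\tilde Y_i)$ and $F_{ij}$, which are built from externally definable sets using quantifiers, and one needs the Shelah/Baisalov--Poizat quantifier elimination to know that such sets are again externally definable (equivalently, that weak o-minimality of $M^{\mathrm{Sh}}$ applies to them). Since the paper invokes that quantifier elimination separately anyway, your proof is a perfectly adequate substitute for the Fact itself.
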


Our first step towards the proof of Theorem \ref{main} is the following very general statement on a specific family of externally definable sets of $M^{n+1}$, for arbitrary (and non-necessarily) saturated o-minimal structures.

\begin{prop}\label{general}Let $h:D\rightarrow \bar{M}$ be an $\bar{M}$-definable function, where $D\subset \bar{M}^n$ is an $\bar{M}$-definable cell, such that for every $x\in D\cap M^n$ the type $\tp(h(x)/M)$ is not definable. Let 
	\[
	X=\{(x,y)\in M^{n+1} \ | \ x\in D \text{ and } y<h(x)\}
	\]
	and let $\{C_{ij}\}$ be a finite family of externally definable subsets of $M^{n+1}$ such that for $Y_i:=\bigcap^r_{j=1} C_{ij}$ we have that
	$X=\bigcup_{i=1}^\ell Y_i$.
	Then there is a family of externally definable subsets $\{F_{ij}\}_{i,j}$ of $D\cap M^n$ with $D\cap M^n=\bigcup_{i,j} F_{ij}$,  and for each $x\in F_{ij}$ there are $y_1,y_2\in M$ with $y_1<h(x)<y_2$ and $(y_1,y_2)\cap C_{ij,x}=(y_1,h(x))\cap M$.
\end{prop}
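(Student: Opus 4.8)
We have $X = \{(x,y) : x \in D, y < h(x)\}$, the "region below the graph of $h$." The graph itself involves a non-definable type (since $\tp(h(x)/M)$ is not definable for each $x \in D \cap M^n$). We're told $X$ has a specific decomposition $X = \bigcup_i Y_i$ where $Y_i = \bigcap_j C_{ij}$. The conclusion is about fibers: for each $(i,j)$ we get an externally definable set $F_{ij} \subset D \cap M^n$, these cover $D \cap M^n$, and for $x \in F_{ij}$, the fiber $C_{ij,x}$ agrees with $(y_1, h(x)) \cap M$ on some interval $(y_1, y_2)$ straddling $h(x)$.

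**What this says.** The point is: if $X$ (below the graph) is a Boolean combination of the $C_{ij}$, then near the "cut" $h(x)$, at least one of the basic pieces $C_{ij,x}$ must itself realize the cut $h(x)$ — i.e., one piece looks exactly like a half-line stopping precisely at $h(x)$. This is the key structural constraint that will later force a contradiction when $h$ is (say) multiplication by $\pi$.

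**Strategy.** Fix $x \in D \cap M^n$. Consider the fiber over $x$: $X_x = \{y \in M : y < h(x)\} = (-\infty, h(x)) \cap M$. This equals $\bigcup_i (Y_i)_x = \bigcup_i \bigcap_j C_{ij,x}$. Each $C_{ij,x}$ is externally definable in one variable, hence (by Fact \ref{F:Convex}) a finite disjoint union of convex sets. The cut $h(x)$ is an upper boundary of $X_x$, and since $X_x$ has no maximum (it's $(-\infty,h(x))$), there's a sequence approaching $h(x)$ from below, all in $X_x$.

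**Key step.** Since $X_x = \bigcup_i (Y_i)_x$ is a finite union, some $(Y_{i})_x$ must have $h(x)$ as its sup (i.e., accumulate at $h(x)$ from below while staying $< h(x)$). Fix such an $i = i(x)$. Then $(Y_{i})_x = \bigcap_j C_{ij,x} \subset X_x$, so it's bounded above by $h(x)$ and accumulates there. The intersection of finitely many "finite unions of convex sets" is again a finite union of convex sets; its sup being $h(x)$ means the rightmost convex component has $h(x)$ as sup. That component is an intersection $\bigcap_j (\text{convex piece of } C_{ij,x})$, and for each $j$ the relevant convex piece must have sup $\geq h(x)$; the one achieving sup exactly $h(x)$ gives us the desired $j = j(x)$. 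Pick $y_1 < h(x)$ in that component and $y_2 > h(x)$; on $(y_1, y_2)$, this $C_{ij,x}$ agrees with $(y_1, h(x)) \cap M$.

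**Uniformity — the main obstacle.** The above is pointwise in $x$; the real work is making the choice $(i(x), j(x))$ and the witnesses $y_1, y_2$ behave *externally definably* in $x$, so that $F_{ij} := \{x : (i(x),j(x)) = (i,j)\}$ is externally definable and the $F_{ij}$ cover $D \cap M^n$. The plan is to pass to the Shelah expansion $M^{\mathrm{Sh}}$, where all the $C_{ij}$ are genuinely definable and quantifier elimination plus weak o-minimality hold. In $M^{\mathrm{Sh}}$, statements like "$C_{ij,x}$ has sup exactly equal to the sup of $X_x$" and "the rightmost convex component of $C_{ij,x} \cap (\text{interval})$ has the correct endpoint behavior" become first-order definable (uniformly in $x$), so the sets $F_{ij}$ are definable in $M^{\mathrm{Sh}}$, hence externally definable in $M$. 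The covering $D \cap M^n = \bigcup F_{ij}$ follows because for every $x$ the pointwise argument produces at least one valid pair. I expect the delicate part to be phrasing the cut-matching condition "$(y_1,y_2) \cap C_{ij,x} = (y_1, h(x)) \cap M$" in a way that is uniformly externally definable without reference to $h(x)$ itself (which is not in $M$): the trick is to characterize it intrinsically as "the sup of $(Y_i)_x$ is witnessed by $C_{ij}$" using only the externally definable data $X$ and the $C_{kl}$, and to handle the straddling witnesses $y_1, y_2 \in M$ via saturation/definable choice in the Shelah expansion.
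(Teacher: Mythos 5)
Your overall strategy coincides with the paper's: analyse each fiber $X_x=(-\infty,h(x))\cap M$ via Fact \ref{F:Convex}, locate a pair $(i,j)$ for which $C_{ij,x}$ realises the cut of $h(x)$ exactly on an interval, and then get external definability of the $F_{ij}$ by working in $M^{\mathrm{Sh}}$ after rewriting every condition that mentions $h(x)$ in terms of the externally definable set $X$ itself. However, there are two genuine gaps. First, the step ``pick $y_1<h(x)$ in that component and $y_2>h(x)$; on $(y_1,y_2)$, this $C_{ij,x}$ agrees with $(y_1,h(x))\cap M$'' is false if $y_2$ is arbitrary: your choice of $j$ only guarantees that the maximal convex component $P_j$ of $C_{ij,x}$ containing the rightmost component of $Y_{i,x}$ lies entirely below $h(x)$, but $C_{ij,x}$ may have further convex components above $h(x)$ (for instance $C_{ij,x}=\bigl((-\infty,h(x))\cup[m,+\infty)\bigr)\cap M$ with $m\in M$, $m>h(x)$), and then $y_2$ must be chosen below all of them. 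A good $y_2$ does exist in your setup --- a next maximal component of $C_{ij,x}$, if any, is separated from $P_j$ by a point of $M\setminus C_{ij,x}$, which necessarily lies above $h(x)$ because $P_j$ is cofinal in the cut; and if there is no further component, non-definability of $\tp(h(x)/M)$ still yields a point of $M$ above $h(x)$ --- but this argument is missing from your write-up, and it is precisely where the paper spends its Claim 3 and the closing convex-component analysis.

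Second, the uniformity half, which is the actual content of the proposition (the $F_{ij}$ must be \emph{externally definable} and cover $D\cap M^n$), is only announced, not carried out. You do identify the correct trick, namely that the cut conditions can be expressed using only $X$ and the $C_{kl}$, since $(y,h(x))\cap M=(y,+\infty)\cap X_x$; this is exactly what the paper does, by introducing $\tilde Y_i=\{(x,y)\in X \mid (y,+\infty)\cap Y_{i,x}=(y,+\infty)\cap X_x\}$, proving that the projections $\pi(\tilde Y_i)$ cover $D\cap M^n$ (its Claims 1 and 2; your ``key step'' is the pointwise version of this), and then defining $F_{ij}\subset\pi(\tilde Y_i)$ by an $M^{\mathrm{Sh}}$-formula expressing ``there exist $y_1,y_2$ with $(x,y_1)\in X$ and $(x,y_2)\notin X$ such that for every $y\in(y_1,y_2)$ one has $(x,y)\in C_{ij}$ if and only if $(x,y)\in X$''. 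Note also that your suggestion to handle the witnesses $y_1,y_2$ ``via saturation/definable choice in the Shelah expansion'' is off the mark: $M$ is not assumed saturated here (the proposition is later applied to $M=\mathbb{R}_{\rm alg}$), and nothing of the sort is needed, as the witnesses are simply existentially quantified in the $M^{\mathrm{Sh}}$-formula above.
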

\begin{proof}First note that since each type $\text{tp}(h(x)/M)$ is not definable, for each $x\in M$ there is some $\varepsilon\in M_{>0}$ such that $-\varepsilon <h(x)<\varepsilon$. Moreover, if $y\in M$ is such that $y<h(x)$ then the set $(y,h(x))\cap M$ is infinite. Indeed, otherwise the type $\tp(h(x)/M)$ would be defined by the formula $y> b$ for some $b\in M$ with $b<h(x)$ and $(b,h(x))\cap M=\emptyset$. Similarly,  if $y\in M$ is such that $y>h(x)$ then the set $(h(x),y)\cap M$ is infinite.
	
Now,  for each $i=1,\ldots,\ell$, consider the set
\[
\tilde Y_i = \left\{ (x,y)\in X \ | \ (y,+\infty) \cap Y_{i,x} = (y,h(x))\cap M  \right\}
\] 
and let $\pi(\tilde Y_i)$ denote its projection on the first $n$ coordinates, that is,
\[
\pi(\tilde Y_i):=\{x\in D\cap M^n \ | \  \exists y_0\in M \text{ with } (x,y_0)\in \tilde Y_i \}.
\]
Both sets are clearly definable in the structure $M^\mathrm{Sh}$ and therefore they are externally definable.

	\medskip
	\noindent{\emph{Claim 1}.} If $x\in D\cap M^n$ is not in $\pi(\tilde Y_i)$, then there is an element $y_0\in M$ with $y_0<h(x)$, {\it i.e.} $(x,y_0)\in X$, such that  $(y_0,h(x))\cap Y_{i,x}=\emptyset$. 
	
	\smallskip
	\noindent{\emph{Proof of Claim 1.}} Assume otherwise that $(y_0,h(x))\cap Y_{i,x}\neq \emptyset$ whenever $y_0\in M$ and $y_0<h(x)$. Note that since the set $Y_{i,x}$ is externally definable, it is a finite union of maximal (disjoint) convex sets $Z_1<\ldots<Z_s$ by Fact \ref{F:Convex}. Choose some $y'\in Z_s$. We next show that 
	$$
	(y',+\infty)\cap Y_{i,x}=(y',h(x))\cap M,
	$$
	which will give the desired contradiction since $x\notin \pi(\tilde Y_i)$. To prove this equality, note first that 
	\[
	(y',+\infty)\cap Y_{i,x}=(y',h(x))\cap Y_{i,x},
	\] as $Y_i\subset X$. So, it suffices to see the inclusion $\supset$. Let $y_0\in (y',h(x))\cap M$. We have by assumption that $(y_0,h(x))\cap Y_{i,x}\neq \emptyset$, so there is some element $y_1\in (y_0,h(x))\cap Y_{i,x}$. 
	Now, recall that 
	\[
	Y_{i,x} = Z_1\cup \ldots\cup Z_s \text{ with } Z_1<\ldots<Z_s.
	\]
	Also $y'\in Z_s$ and $y'<y_0<y_1$ with $y_1\in Y_{i,x}$. Thus, we deduce that $y_1\in Z_s$ and therefore by convexity $y_0\in Z_s\subset Y_{i,x}$, as required. \hfill $\square${\tiny Claim 1}
	\medskip
	
	\medskip
	\noindent{\emph{Claim 2}.} We have $D\cap M^n=\bigcup_{i=1}^\ell \pi(\tilde Y_i)$. 
	
	\smallskip
	\noindent{\emph{Proof of Claim 2.}} Let $x_0\in D\cap M^n$ and suppose that $x_0\notin \pi(\tilde Y_i)$ for every $i=1,\ldots,\ell$. By Claim 1 there is some element $y_0\in M$ with $(x_0,y_0)\in X$ such that $(y_0,h(x_0))\cap Y_{i,x_0}=\emptyset$ for every $i=1,\ldots,\ell$. On the other hand, since $X=\bigcup_{i=1}^\ell Y_i$, we also have 
	\[
	(-\infty,h(x_0)) \cap M = \bigcup_{i=1}^\ell Y_{i,x_0}.
	\] 
	However, since the set $(y_0,h(x_0))\cap M$ is infinite, as remarked at the very beginning of the proof, we obtain a contradiction. 
	\hfill $\square${\tiny Claim 2}
	\medskip

	
	Now, recall that each $Y_i= \bigcap_{j=1}^r C_{ij}$ where each $C_{ij}\subset M^{n+1}$ is a externally definable subset. Given $i=1,\ldots,\ell$ and  $j=1,\ldots,r$, define the set
	\[
	F_{ij}=\{x\in \pi(\tilde Y_i) \ | \ \exists y_1,y_2\in M,\,  y_1<h(x)<y_2 \text{ and } (y_1,y_2) \cap C_{ij,x} = (y_1,h(x))\cap M\},
	\]
	which is externally definable.
	
	To finish the proof, it is enough to prove that 
	\[
	D\cap M^n=\bigcup_{i=1}^\ell \bigcup_{j=1}^r F_{ij}.
	\]	
	Let $x\in D\cap M^n$. By Claim 2 there is some $i=1,\ldots,\ell$ with $x\in \pi(\tilde Y_i)$. Thus,   there is an element $y_1\in M$ with $y_1<h(x)$ such that $(y_1,+\infty)\cap Y_{i,x}=(y_1,h(x))\cap M$. Since $Y_i=\bigcap_{j=1}^r C_{ij}$, for all $j=1,\ldots,r$ we have $(y_1,h(x))\cap M \subset C_{ij,x}$ and hence 
	$$
	(y_1,h(x))\cap M = (y_1,h(x))\cap C_{ij,x},
	$$
	since $C_{ij,x}\subset M$. 	
	Therefore, we need only prove that there exist some $j=1,\ldots,r$ and  some $y_2\in M$ with $h(x)<y_2$ for which $(y_1,h(x))\cap C_{ij,x} = (y_1,y_2)\cap C_{ij,x}$. 
	
	Recall that if $\xi\in M$ is such that $\xi>h(x)$ then $(h(x),\xi)\cap M$ is infinite. Furthermore, we first claim: 
	
	\medskip
	\noindent{\emph{Claim 3}.} There exists some $j\in \{1,\ldots,r\}$ such that for every $\xi\in M$ with $\xi>h(x)$ we have $(h(x),\xi)\cap M\not\subset C_{ij,x}.$
	
	\smallskip
	\noindent{\emph{Proof of Claim 3.}} Otherwise, for each $j$ there is some $\xi_j\in M$ with $\xi_j>h(x)$ such that $(h(x),\xi_j)\cap M\subset C_{ij,x}$. Set  $\xi':=\min\{\xi_1,\ldots,\xi_r\}$ and note then that 
	\[
	(h(x),\xi')\cap M\subset (y_1,+\infty)\cap Y_{i,x}=(y_1,h(x))\cap M,
	\]
	where the last equality holds by the choice of $y_1$. However, the above is clearly a contradiction.
	\hfill $\square${\tiny Claim 3}
	\medskip
	
	Let $j$ be given by Claim 3 and consider the non-empty externally definable set
	\[
	\{y'\in M \ | \ y'>h(x) \text{ and } y'\in M\setminus C_{ij,x}\}.
	\]
	By Fact \ref{F:Convex} this is a finite union of maximal (disjoint) convex sets $Z_1< \ldots<Z_s$. Fix an arbitrary element $y_2\in Z_1$. We claim that $(h(x),y_2)\cap M\subset Z_1$. Indeed, given and element $\xi\in (h(x),y_2)\cap M$, there is some  $\xi'\in (h(x),\xi)\cap M$ with $\xi'\in M\setminus C_{ij,x}$, by the choice of $j$. So, we have that $\xi'\in Z_k$ for a certain $k=1,\ldots,s$. Since $\xi'<\xi<y_2$ we must have that $\xi'\in Z_1$ and by convexity $\xi\in Z_1$, as desired.
	
	In particular, once we know that $(h(x),y_2)\cap M\subset Z_1$, it follows that 
	$$
	(h(x),y_2)\cap M \subset M\setminus C_{ij,x}
	$$
	and so 
	\[
	(y_1,y_2)\cap C_{ij,x} = \big((y_1,h(x)) \cap C_{ij,x} \big) \cup \big((h(x),y_2) \cap C_{ij,x} \big) = (y_1,h(x)) \cap C_{ij,x}.
	\]
	 This finishes the proof.\end{proof}	

Now, we introduce some notation. Let $M$ denote an o-minimal expansion of a real closed field. Denote by $\inf^+_M(\mathbb R)$ the set of positive elements of $M$ which are infinitesimal with respect to $\mathbb R$, that is
\[
\inf\nolimits^+_M(\mathbb{R}):=\left\{x\in M_{>0} \ | \ x<\tfrac{1}{n} \text{ for all }n\in \N \setminus\{0\} \right\}.
\]
Consider the unique type $p_{\text{id}}(x)\in S_1(M)$ determined by the set of formulas
\[
\left\{x<\tfrac{1}{n} \ | \ n\in \mathbb{N}\setminus\{0\} \right\} \cup \left\{m<x \ | \ m\in \inf\nolimits^+_M(\mathbb{R})\right\}.
\]
The subscript $\mathrm{id}$ stands for idempotent, since the type $p_{\text{id}}$ is idempotent in the space of types with respect to the semigroup operation induced by addition.

Now, Theorem \ref{main} is a straightforward consequence of Corollary \ref{C:Equiv-Ellis-ext} and the following proposition.

\begin{prop}\label{ex}Let $M$ be an $\aleph_0$-saturated o-minimal expansion of a real closed field. Let $\bar M$ be an $|M|^+$-saturated extension of $M$ and let $e\in \bar M$ be a realization of $p_{\rm id}$. The externally definable subset 
	$$
	X=\{(x,y)\in M^2 \ | \ 0<x \text{ and } 0<y<e\cdot x\}
	$$
	of the additive group $(M^2,+)$ is not a Boolean combination of $d$-definable sets.
\end{prop}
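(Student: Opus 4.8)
The plan is to derive a contradiction from the assumption that $X$ is a Boolean combination of $d$-definable sets, by feeding $X$ into Proposition \ref{general} and then exploiting that the boundary of a $d$-definable set is a translate of an $M$-definable curve, whose slope stays $M$-definable. Throughout write $u\approx_M v$ to mean that $u,v$ realize the same cut over $M$ (equivalently, for bounded elements, that $u-v$ is smaller in absolute value than every positive element of $M$). Suppose toward a contradiction that $X$ is such a combination; by the observation in the proof of Corollary \ref{C:Equiv-Ellis-ext} it is a \emph{positive} one. To match the shape required by Proposition \ref{general} I would replace $X$ by $X'=X\cup\{(x,y):0<x,\ y\le 0\}$: the added set is $M$-definable, hence $d$-definable, so $X'$ is still a positive Boolean combination of $d$-definable sets, and now $X'=\{(x,y)\in M^2:\ x\in D,\ y<h(x)\}$ with $D=\{x\in\bar M:x>0\}$ and $h(x)=e\cdot x$. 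Since $\tp(e/M)=p_{\rm id}$ is non-definable and $t\mapsto t\cdot x$ is an $M$-definable bijection for $x\in M_{>0}$, the type $\tp(h(x)/M)=\tp(ex/M)$ is non-definable for every $x\in D\cap M=M_{>0}$. Thus Proposition \ref{general} applies and produces externally definable sets $F_{ij}\subseteq M_{>0}$ covering $M_{>0}$ together with the boundary-matching property.

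Next I would extract a single boundary function. Writing each $d$-definable $C_{ij}=(Z_{ij}+(a_{ij},b_{ij}))\cap M^2$ with $Z_{ij}$ an $M$-definable set and $(a_{ij},b_{ij})\in\bar M^2$, the fibrewise boundary of $C_{ij}$ over $x$ consists of the finitely many values $\zeta(x-a_{ij})+b_{ij}$, where $\zeta$ ranges over the ($M$-definable) boundary functions of $Z_{ij}$. Using Fact \ref{F:Convex}, the matching condition $(y_1,y_2)\cap C_{ij,x}=(y_1,ex)\cap M$ forces one of these boundary values to realize the cut of $ex$ over $M$, that is, to be $\approx_M ex$. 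As there are finitely many indices and finitely many boundary functions while $M_{>0}$ is infinite, a pigeonhole argument yields one $(M\cup\{a\})$-definable function $\phi(x)=\zeta(x-a)+b$ (with $a=a_{ij}$, $b=b_{ij}$) such that $\phi(x)\approx_M ex$ for all $x$ in an infinite set $F\subseteq M_{>0}$.

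I would then upgrade this from $F$ to an interval, and from values to derivatives. The $\bar M$-definable function $\psi(x)=\phi(x)/x$ satisfies $\psi(x)\approx_M e$ on $F$; since the monad of $e$ over $M$ is convex and $\psi$ is piecewise monotone, choosing $x_1<x_2$ in $F$ inside one monotone piece gives $\psi\approx_M e$, hence $\phi(x)\approx_M ex$, on the whole interval $I'=(x_1,x_2)$ with $x_1,x_2\in M$. Consequently $F_0(x):=\phi(x)-ex$ is $M$-infinitesimal on $I'$; restricting to a $C^1$ monotone subinterval and applying the definable mean value theorem, one bounds $F_0'(t)$ by the ($M$-infinitesimal) total increment of $F_0$ divided by a non-infinitesimal length, producing an $\bar M$-interval $I''\subseteq I'$ of non-infinitesimal length on which $\phi'(x)=\zeta'(x-a)\approx_M e$. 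Setting $J:=I''-a$, this gives $\zeta'(t)\approx_M e$ for all $t\in J$, with $J$ of non-infinitesimal length $|I''|$.

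Finally I would close the argument by a dichotomy on the external shift $a$, which is exactly where I expect the real difficulty to lie: translations by external (transcendental) elements are precisely what could, a priori, reproduce the slope-$e$ boundary, and the proof must neutralize them. Because $I''$ is bounded by the $M$-points $x_1,x_2$, the interval $J=I''-a$ is finite when $a$ is finite and consists entirely of infinite elements when $a$ is infinite. If $a$ is finite, then $J$ is a finite interval of non-infinitesimal length, so by density of $M$ it contains some $m\in M$; but then $\zeta'(m)\in M$ while $\zeta'(m)\approx_M e$, which is impossible since no element of $M$ is $\approx_M e$ (as $\inf^+_M(\mathbb R)$ has no greatest element). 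If $a$ is infinite, then every $t\in J$ is infinite, so $\zeta'(t)\approx_M \ell$ where $\ell=\lim_{s\to\pm\infty}\zeta'(s)\in M\cup\{\pm\infty\}$ is the germ of the $M$-definable function $\zeta'$ at infinity, again contradicting $\zeta'(t)\approx_M e$. Either way we obtain a contradiction, so $X$ is not a Boolean combination of $d$-definable sets. The decisive structural point in both cases is that the slope $\zeta'$ remains $M$-definable, and hence can be $\approx_M e$ neither at an $M$-point nor as a germ at infinity.
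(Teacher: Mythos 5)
Your high-level plan --- push $X$ through Proposition \ref{general}, extract a translated $M$-definable boundary function $\phi(x)=\zeta(x-a)+b$, and argue that the $M$-definable slope $\zeta'$ cannot track $e$ --- is exactly the intuition behind the paper's proof, and your opening moves (adding the $M$-definable set $\{y\le 0\}$ to match the shape required by Proposition \ref{general}, forcing a boundary value of some $C_{ij}$ to realize the cut of $e\cdot x$, pigeonholing to a single $\zeta$) are sound. But the quantitative core of your argument has a genuine gap, and it sits precisely where the difficulty of this proposition lives: $p_{\rm id}$ is a \emph{wide} (valuational) cut. Your parenthetical claim that, for bounded elements, realizing the same cut over $M$ is equivalent to having difference below every positive element of $M$ is false: for any $m\in\inf\nolimits^+_M(\mathbb{R})$ --- a \emph{positive element of $M$} --- both $e$ and $e+m$ realize $p_{\rm id}$, and likewise $\phi(x)$ and $e\cdot x$ can realize the same cut over $M$ while differing by as much as $m\cdot x$. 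Hence $F_0=\phi-ex$ need \emph{not} be $M$-infinitesimal on $I'$, and your mean value step only bounds $F_0'$ at its witness points by a quantity of the form (positive element of $M$)/(length of $I'$), which is no bound at all; the interval $I''$ on which $\phi'\approx_M e$ is never produced. This is not a fixable issue of constants: any argument that subtracts two realizations of such a cut loses all control. The paper is organized to avoid exactly this. It uses that the sets $F_{ij}$ produced by Proposition \ref{general} are \emph{externally definable}, hence finite unions of convex sets (Fact \ref{F:Convex}), so one of them contains a tail $M_{>a}$; this yields the honest set equality (\ref{ME}) against a single $M$-definable $f(x+c)+d$ on a tail, and from then on only \emph{one-sided} estimates are used: in Claim 4, ``$f'>e$ at $M$-points of $J$'' is upgraded by $\aleph_0$-saturation to ``$f'>\tfrac{1}{n_0}$ on all of $J$'' for a \emph{standard} $n_0$, and the endgame compares $g$ only against the $M$-element $\ell=\lim f'$, never against a second nonstandard realization of the cut.

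Independently of this, your closing dichotomy on $a$ is too coarse, and its ``finite'' branch is wrong: $M$ is \emph{not} dense in its convex hull inside $\bar M$. If $a$ realizes a wide cut (for instance the cut separating the elements of $M$ bounded by a natural number from the unbounded ones, which is realized in $\bar M$ by saturation), then $J=I''-a$ is an interval of non-infinitesimal length lying inside the convex hull of $M$ which contains \emph{no} point of $M$: an $M$-point $y\in(x_1-a,x_2-a)$ would give elements $x_1-y<a<x_2-y$ of $M$ on opposite sides of the cut of $a$ differing by the finite amount $x_2-x_1$, which is impossible for such a cut. So ``by density of $M$ it contains some $m\in M$'' fails, and the finite-$a$ case --- which is exactly where translations by external elements could conceivably simulate multiplication by $e$, and hence where the real work is --- remains unhandled (your infinite-$a$ case, by contrast, is essentially correct). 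The paper sidesteps this through Claim 3, reducing to half-line domains $I=(i,+\infty)$ or $(-\infty,i)$, on which translated $M$-points can always be found regardless of the cut that $c$ realizes.
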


\begin{remark}The underlying intuition behind the above statement is clear: the multiplication map $x\mapsto e\cdot x$ cannot be equal to a  map of the form 
	$$g(x):=f(x+c)+d,$$
	where $f$ is some $M$-definable map and $c,d\in\bar{M}$. In fact, the latter is clear: if $e\cdot x=g(x)=f(x+c)+d$ for all $x\in \bar M_{>0}$ then $f(x)=e(x-c)-d$ and since $f$ is $M$-definable then it is easy to deduce that $e\in M$, which is a contradiction. However, our situation is completely different: what we really have to show is that the map $x\mapsto \tp(e\cdot x/M)$ cannot be equal to a map $x\mapsto \tp(g(x)/M)$ where $g(x)$ is as above. If such is the case then \emph{essentially} $\tp(g'(x)/M)=\tp(e/M)$ and  \emph{therefore} $g(x)=e\cdot x+k$ for some $k\in \bar M$, from where we obtain easily another contradiction. The two words in emphasis are far from clear, see Example \ref{extddef} below. 
\end{remark}

In the proof of Proposition \ref{ex} we try to make sense of the above intuition through a case-by-case development via the mean value theorem (see \cite[Ch.\,7]{vdD98}).

\begin{proof}[Proof of Proposition \ref{ex}] In an o-minimal structure, each definable set over a set of parameters is a finite disjoint union of cells defined over the same set of parameters. Thus, a Boolean combination of $d$-definable sets of $(M^2,+)$ is a Boolean combination of $d$-definable cells, {\it i.e.}, sets of the form $C=\big (\widetilde{C}+(c,d)\big) \cap M^2$ where $\widetilde{C}$ is an $M$-definable cell and $(c,d)\in \bar M^2$. 

Suppose that the given set $X$ is a Boolean combination of $d$-definable cells. We can write it as  $X=\bigcup_i Y_i ,$ where  $Y_i:=\bigcap^{r}_{j=1} C_{ij}$ for some $d$-definable cells $C_{ij}$. We first see that we may reduce to the case when $X$ is a specific single $d$-definable cell, after possibly shrinking the set $X$. We prove:

\noindent \emph{Claim 1.} There are some $a\in M$ with $a>0$, an $M$-definable function $f:I\to \bar M$ with $I$ an open interval, and elements $c,d\in\bar M$ such that $M_{>a}\subset I-c$ and
\begin{multline}\label{ME}
\left\{(x,y)\in M^2 \ | \ (a<x ) \wedge (0<y<e\cdot x) \right\}=  \\
=\left\{(x,y)\in M^2 \ | \ (a<x )\wedge (0<y<f(x+c)+d) \right\}. \tag{ME}
\end{multline}

\noindent\emph{Proof of Claim 1.} Note that $\text{tp}(e/M)$ is not definable, and therefore for every $x\in M_{>0}$ we have that $\text{tp}(e\cdot x/M)$ is neither definable. Hence, by Proposition \ref{general}, there is a family of externally definable subsets $\{F_{ij}\}$ of $M_{>0}$ such that $M_{>0}=\bigcup F_{ij}$,  and for each $x\in F_{ij}$ there are $y_1,y_2\in M$ with $y_1<e\cdot x<y_2$ and $(y_1,y_2)\cap C_{ij,x}=(y_1,e\cdot x)\cap M$. Note that we can choose $y_1$ with $y_1>0$. 

Each $F_{ij}$ is a finite union of convex sets, by Fact \ref{F:Convex}. Therefore there exist a pair $(i,j)$ and an element $a\in M_{>0}$ such that $M_{>a}\subset F_{ij}$. Denote $C:=C_{ij}$. So, for all $x\in M_{>a}$ there exist some $y_1\in (0,e\cdot x)\cap M$ and some $y_2\in (e\cdot x,+\infty)\cap M$ with
$$
(y_1,y_2) \cap C_x=(y_1,e\cdot x)\cap M.
$$ 
By definition, the $d$-definable cell $C$ equals to  $(\widetilde{C}+(-c,d)\big) \cap M^2$ for some element $(-c,d)\in \bar M^2$ and some $2$-dimensional cell 
$$
\widetilde{C}:=(f_1,f_2)= \left\{(x,y)\in I\times \bar M \ | \ f_1(x)<y<f_2(x)\right\},
$$
where $I\subset \bar M$ is an open interval, defined over $M$, and $f_1,f_2:I\rightarrow \bar M$ are two $M$-definable functions. Note that
\begin{align*}
\big(\widetilde{C}+(-c,d)\big) & =\left\{(x-c,y+d)\in (I-c)\times \bar M \ | \ f_1(x)<y<f_2(x)\right\} \\
& =\left\{(x,y)\in (I-c)\times \bar M \ | \ f_1(x+c)+d<y<f_2(x+c)+d \right\} .
\end{align*}
Thus, since $C=(\widetilde{C}+(-c,d)\big) \cap M^2$, for each $x\in M_{>a}$ there exist $y_1\in (0,e\cdot x)\cap M$ and $y_2\in (e\cdot x,+\infty)\cap M$ with
$$
(y_1,e\cdot x)\cap M= (y_1,y_2)\cap C_x=(y_1,y_2)\cap \big(f_1(x+c)+d,f_2(x+c)+d\big)\cap M.
$$
In particular, it follows that $y_1<f_2(x+c)+d$ and $f_1(x+c)+d \le e\cdot x<y_2$. So, taking the union with the interval $(0,y_1]\cap M$ in the above equalities, we get
\[
(0,e\cdot x)\cap M= (0,y_2)\cap \big(0,f_2(x+c)+d\big)\cap M.
\]
Note we must have that $f_2(x+c)+d <y_2$ as otherwise $(0,e\cdot x)\cap M= (0,y_2)\cap M$, which is a contradiction. Altogether, it follows readily that
\begin{multline*}
\{(x,y)\in M^2 \ | \  (a<x )\wedge (0<y<e\cdot x)\}=  \\
=\{(x,y)\in M^2 \ | \ (a<x )\wedge (0<y<f_2(x+c)+d)\},
\end{multline*}
as desired. \hfill $\square${\tiny Claim 1}
\medskip

Hence, to finish the proof it is enough to contradict Claim 1. In other words, we need to show that there cannot exist $a\in M_{>0}$ and $c,d\in \bar M$ and an $M$-definable function $f:I\rightarrow \bar M$ satisfying (\ref{ME}). The term ME stands for \emph{main equality}. 

Let $a\in M$, $c,d\in \bar M$ and $f:I\to \bar M$ be given by Claim 1. Note that we are assuming that we can evaluate $f$ in $x+c$ for each $x\in M_{>a}$, since we have $M_{>a}\subset I-c$.

\noindent \emph{Claim 2.} We can assume that $I\subset \bar M$ is an open interval definable over $M$ satisfying that:
\begin{itemize}
	\item the function $f$ is $\mathcal{C}^1$,
	\item either $f>0$ or $f<0$ holds, and
	\item it holds $f'>0$ and in particular $f$ is strictly increasing.
\end{itemize}

\noindent\emph{Proof of Claim 2.} Indeed, by o-minimality there are open disjoint subintervals $I_1,\ldots,I_s$ of $I$, all definable over $M$, with $F:=I\setminus (I_1\cup \cdots \cup I_s)$ finite and such that each $f|_{I_i}$ is a $\mathcal{C}^1$-function satisfying that either $f|_{I_i}>0$ or $f|_{I_i}<0$ or $f|_{I_i}=0$, and either $f'|_{I_i}>0$ or $f'|_{I_i}<0$ or $f'|_{I_i}=0$. 

Each $J_i:=(I_i-c)\cap M_{>a}$ is a convex subset of $M_{>a}$ with $J_i\cap J_j=\emptyset$ if $i\neq j$. Since $J_i$ is convex, either there is an element $b\in M_{>a}$ such that $J_i\subset M_{<b}$, in which case we will say that $J_i$ is {\em bounded}, or there is some $b\in M_{>a}$ such that $M_{>b}\subset J_i$. Thus, as $M_{>a}\subset I-c$, we have that  
\[
M_{>a}\subset (I-c)=(F-c) \cup \bigcup_{i=1}^s J_i
\]
 and therefore there is $a_1\in M_{>a}$ such that $M_{>a_1}\subset \bigcup_{i=1}^s J_i$. In particular, not all $J_i$ can be bounded. So, there are an $i=1,\ldots,s$ and some  $a_2\in M_{>a_1}$ such that $M_{>a_2}\subset J_i$. In particular, in (\ref{ME}) we can replace $a$ by $a_2$ and $f$ by $f|_{I_i}$, as required as we will see next. 

Note that in (\ref{ME}), the case $f=0$ is not allowed. Moreover, the case $f'\le 0$ is also excluded. To see the latter, consider the $\bar M$-definable function $g(x):=f(x+c)+d$, which is differentiable over its domain $I-c$. By the mean value theorem, for all $x_1,x_2\in M_{>a}$ with $x_1<x_2$ there is some  $u\in \bar M$ with $x_1<u<x_2$ such that
\[
g(x_2)-g(x_1) = g'(u)\cdot (x_2-x_1)=f'(u+c)\cdot (x_2-x_1).
\] 
If $f' \le 0$, then we have that $g(x_2)\le g(x_1)$ for all $x_1,x_2\in M_{>a}$ with $x_1<x_2$. However, for a fixed $x_1\in M_{>a}$, choose some $m_1,m_2\in M$ with $0<m_1<m_2<e$. Let $x_2=m_1^{-1} \cdot x_1$ and $y=m_2\cdot x_2$. Thus, we have that $ex_1<x_1< y < e\cdot x_2$. So, we get by (\ref{ME}) that $y<g(x_2)$ but $x_1<y\not < g(x_1)$, a contradiction.
\hfill $\square${\tiny Claim 2}
\medskip

Henceforth, we define 
$$g(x):=f(x+c)+d$$
which is a differentiable over its domain $I-c$ and it is definable over $\bar M$. Moreover, note that for all $x\in M_{>a}$ we have that $0<g(x)$, by (\ref{ME}). 

From now on, we will use the mean value theorem without explicit reference.

\medskip
\noindent \emph{Claim 3.} We can assume that the interval $I$ is of the form $(i,+\infty)$ for some $i\in M$ or of the form $(-\infty,i)$ for some $i\in M$. Moreover, in the former case we have that $c>i-(a+1)$ and in latter case we have $c<M$.	

\smallskip

\noindent\emph{Proof of Claim 3.} Note first that $c<M$ whenever $I\subset (-\infty,i)$ for some $i\in M$, since in this case $M_{>a} \subset I-c$ and hence $M<i-c$. Also, if $I\subset (i,+\infty)$ for some $i\in M$, then $c>i-(a+1)$. After observing this, note also that the interval $I$ cannot be of the form $(i_1,i_2)$ for some $i_1,i_2\in M$ with $i_1<i_2$, as otherwise we would have $i_1-(a+1)<c <M$, a contradiction.

Hence, it remains to consider the case when $I=(-\infty,+\infty)$. In that case, if there is some $c_1\in M$ such that $c_1<c$ then for every $x\in M_{>a}$ we have that $a+c_1<x+c$. In which case, we set $I'=(a+c_1,+\infty)$. Otherwise, we would have that $c<M$ and set $I'=(-\infty,0)$. In either case we have that $I'$ satisfies that $M_{>a}\subset I'-c$. Hence, we can replace $I$ by $I'$, obtaining the claim.
	\hfill $\square${\tiny Claim 3}
\medskip

From now on we assume that $I$ is of the form given in Claim 3. To get the desired contradiction, now we study how the derivative of $f$ behaves on $I$.

\medskip
\noindent \emph{Claim 4.} There is some $M$-definable unbounded open interval $J\subset I$ such that $f'(x)<e$ for every $x\in J\cap M$. In particular, for every natural number $n\ge 1$ and every $x\in J$ we have that $f'(x)<\tfrac{1}{n}$.

\smallskip
\noindent\emph{Proof of Claim 4.} Since $f'$ is an $M$-definable function on $I\subset \bar M$, we have that
\[
I\cap M=\left\{ x \in I \cap M \ | \ f'(x)<e \right\}  \sqcup \left\{ x \in I\cap M \ | \ f'(x) > e \right\}.
\]
The two disjoint sets from the right hand side of the equality are externally definable. So, both equal a finite union of convex sets by Fact \ref{F:Convex}. This yields that only one of them can be unbounded. In particular, there exists some unbounded open $M$-definable interval $J\subset I$ such that $f'|_{J\cap M} <e$ or $f'|_{J\cap M} >e$.

Suppose, to get a contradiction, that $f'(x)>e$ for every $x\in J\cap M$. As $f'$  and $J$ are both definable over $M$, it follows by $\aleph_0$-saturation of $M$ that there is some natural number $n_0\ge 1$ such that $f'(x)>\tfrac{1}{n_0}$ for every $x\in J$, since clearly
\[
J\cap M = \bigcup_{n\ge 1} \left\{ x\in J \cap M \ | \ f'(x) >\tfrac{1}{n}\right\}.
\]
Now, choose some  $x_1,x_2\in M_{>a}$ with $x_2=2x_1$. Furthermore, by Claim 3 we can choose them so that $[x_1+c,x_2+c]\subset J$. Indeed, if $I$ is of the form $(-\infty,i)$ then by Claim 3 we know that $c<M$ and so both $x_i +c\in J$, since $J$ is of the form $(-\infty,j)$ for some $j\in M$ with $j<i$. On the contrary, if $I$ is of the form $(i,+\infty)$ with $i\in M$, then $J=(j,+\infty)$ for some $j\in M_{>i}$ and also $c \not < M$ by Claim 3. So, if $M<c$ then choose any $x_1\in M_{>a}$. Otherwise, it suffices to choose $x_1 > a+|j - c |$ to ensure that $x_1,x_2\in M_{>a}$ and that $x_1+c$ and so $x_2+c$ belong to $J$. 

As $x_1<x_2$, there is an element $\hat x\in\bar M$ with $x_1<\hat x<x_2$ such that 
\[
g(x_2) - g(x_1) = g'(\hat x)\cdot (x_2-x_1) = f'(\hat x + c)\cdot x_1 > \frac{x_1}{n_0},
\]
where the inequality holds since  $\hat x+c\in J$. Thus $g(2x_1)>g(x_1) + \tfrac{x_1}{n_0}$. Now, fix some $ m\in M$ such that $0<m<e$. Since $m\cdot x_1<e\cdot x_1$, we have by (\ref{ME}) that $m\cdot x_1 < g(x_1)$ and therefore
\[
m\cdot x_1 + \frac{1}{n_0} \cdot x_1 < g(x_1) + \frac{1}{n_0} \cdot x_1  < g(2x_1).
\]
So, again by (\ref{ME}) we obtain that  $m\cdot x_1 + \tfrac{1}{n_0} \cdot x_1 < e\cdot 2x_1$ and hence $\tfrac{1}{2}(m+\tfrac{1}{n_0}) <e$, which is a contradiction. This yields the claim. 
\hfill $\square${\tiny Claim 4}
\medskip

Now, let $J\subset I$ be the open interval given by Claim 4. This is unbounded and definable over $M$. Since the function $f'$ is definable over $M$, by Claim 2 and 4 the limit $\ell=\lim_{|x|\to +\infty} f'(x)$ belongs to $M$ and satisfies that $0<\ell <e$.  Thus, for every $\varepsilon\in M$ with $0<\varepsilon$ there is some unbounded open interval $J_\varepsilon\subset J$, definable over $M$, such that $|f'(x) - \ell|<\varepsilon$ for every $x\in J_\varepsilon$. 

Fix $\varepsilon=\ell$. Let $x_1\in M_{>a}$ be such that $x_1 + c\in J_\ell$; this is possible arguing as in the proof of Claim 4. Let now $m\in M$ be with $0<m<e$ and set $x_2=x_1\cdot m^{-1}$, which is also an element of $M_{>a}$ with $x_1<x_2$. It follows as in the proof of Claim 3 that $x_2+c\in J_\ell$. Hence, there is some $\hat x\in \bar M$ with $x_1<\hat x <x_2$, and so $\hat x+c\in J_\ell$, such that 
\[
g(x_2)-g(x_1)=g'(\hat{x})\cdot (x_2-x_1) = f'(\hat{x}+c)\cdot (x_2-x_1)< 2\ell \cdot  (x_2-x_1).
\]
On the other hand, for $y=x_1+2\ell\cdot (x_2-x_1)\in M$ we have 
\[
0<y < x_1+ 2 \ell \cdot x_2=m \cdot x_2+ 2\ell\cdot  x_2=(m+2\ell)\cdot x_2<e\cdot x_2
\]
and therefore by (\ref{ME}) it follows that $y<g(x_2)$. Thus, we obtain
$$
x_1+2\ell\cdot (x_2-x_1) = y < g(x_2) < g(x_1)+2\ell\cdot (x_2-x_1),$$
which yields that $x_1<g(x_1)$. So, again  by (\ref{ME}) we have that $x_1<e\cdot x_1$, a contradiction as $e<1$. This final contradiction finishes the proof.
\end{proof}	

Finally, let us point out that there are externally definable sets that may not seem a Boolean combination of $d$-definable,  but actually are.

\begin{example}\label{extddef}Let $M$ be an $\aleph_0$-saturated o-minimal expansion of a real closed field. Let $\bar M$ be an $|M|^+$-saturated extension of $M$ and let $e\in \bar M$ be a realization of $p_{\text{id}}$. We show that the externally definable set
	\begin{multline*}X:=\{(x,y)\in M^2 \ | \ (0<x<e)  \wedge \, (0<y< e\cdot x)\}=\\
		=\{(x,y)\in M^2 \ | \ x\in \text{inf}^+_\mathbb{R}(M) \wedge \, (0<y< e\cdot x)\}\end{multline*}
	is an intersection of two $d$-definable sets.
	
	For each $a\in M_{>0}$ denote by $p^{(a)}_{\rm id}:=\tp(e\cdot a/M)$, which is the unique type that extends the set of formulas
	$$
	\left\{x<\tfrac{a}{n} \ | \ n\in \mathbb{N}\setminus\{0\}\right\}\cup \left\{a\cdot m <x \ | \  m\in \text{inf}^+_\mathbb{R}(M)\right\}.
	$$
	Note that for $c\in \text{inf}^+_{\mathbb{R}}(M)$ and $d\in \bar M$ realizing $ p^{(c)}_{\text{id}}$, we have 
	\begin{enumerate}
	\item[$(\ast)$] the element	$\frac{c^2}{2}+d$ realizes $p^{(c)}_{\text{id}}$.
	\end{enumerate}
	Indeed, if there is some infinitesimal $m\in \text{inf}^+_{\mathbb{R}}(M)$ with $\frac{c^2}{2}+d < m\cdot c$ then
	$d<c\cdot (m-\tfrac{c}{2})$. Since $c,d>0$, we must have that $m-\tfrac{c}{2}>0$ and so  $m-\tfrac{c}{2}\in \text{inf}^+_{\mathbb{R}}(M)$. However, this contradicts the fact that $d$ realizes $p^{(c)}_{\text{id}}$.
	
	On the other hand, assume that there is some natural number $n\ge 1$ such that $\frac{c^2}{2}+d \ge\frac{c}{n}$. Since $c\in \text{inf}^+_{\mathbb{R}}(M)$ we have $n\cdot c<1$ and therefore 
	$$
	d\ge \left(\frac{1}{n}-\frac{c}{2} \right)\cdot c =\frac{2-n\cdot c}{2n}\cdot c >\frac{1}{2n}\cdot c,
	$$
	which yields that $d$ does not realize $p^{(c)}_{\text{id}}$, a contradiction.

	Finally, once we have seen $(\ast)$, consider the $\emptyset$-definable sets 
	$$
	Z_1:=\{(x,y)\in \bar M^2\ | \ y\leq \tfrac{1}{2} \cdot x^2\} \quad \text{ and }\quad Z_2:=(-\infty,0)\times \bar M,
	$$
	as well as their associated $d$-definable sets
	$$
	X_1:=M^2\cap \left(Z_1-(e,\tfrac{e^2}{2})\right) = \left\{(x,y)\in M^2 \ | \ y\leq \tfrac{1}{2}\cdot (x+e)^2-\tfrac{1}{2}\cdot e^2 =\tfrac{x^2}{2} +e\cdot x\right \}
	$$
	and
	$$X_2:=M^2\cap \big(Z_2+(e,0)\big)=\left\{(x,y)\in M^2 \ | \ x<e \right\}.$$
	In particular,  the externally definable set
	$$X_1\cap X_2 \cap \big((0,\infty)\times (0,\infty)\big)=\left\{(x,y)\in M^2 \ | \ (0<x<e) \wedge \big(0< y\leq \tfrac{1}{2}\cdot x^2+x\cdot e\big)\right\}$$
	is an intersection of two $d$-definable sets and by $(\ast)$ is clearly equal to $X$. Indeed, given a pair $(x,y)\in M^2$ with $0<x<e$, we see that $0<y<e\cdot x$ if and only if $y\in M_{>0}$ is strictly smaller than any realization of $p^{(x)}_{\text{id}}$. So, the condition $0<y<e\cdot x$ is equivalent to $0< y\leq \tfrac{1}{2}x^2+x\cdot e$, by $(\ast)$, as required.
\end{example}

\subsection{Arbitrary dimension in the real algebraic numbers} Given a real closed field extension $\bar R$ of $\mathbb{R}$ denote by $\mathrm{Fin}_\mathbb{R}(\bar R)$ the set of finite non-standard reals, {\it i.e.}
\[
\mathrm{Fin}_\mathbb{R}(\bar R) = \left\{ x\in \bar R \ | \ \text{ there is some $r\in\mathbb R$ with } |x|<r   \right\}.
\]
Recall that the real field is Dedekind complete. Therefore, for any real closed field extension $\bar R$ of $\mathbb{R}$ there exists a well-defined \emph{standard map} $$\mathrm{st}:\mathrm{Fin}_\mathbb{R}(\bar R)\rightarrow \mathbb{R}$$
which sends an element $x\in \mathrm{Fin}_\mathbb{R}(\bar R)$ to the unique element $\mathrm{st}(x)\in \mathbb{R}$ such that $x\in \mathrm{st}(x)+\mathrm{inf}_\mathbb{R}(\bar R)$.  More generally, given a natural number $n\in \mathbb{N}$ with $n\ge 1$ we define 
\[
\mathrm{st}:\mathrm{Fin}^n_\mathbb{R}(\bar R)\rightarrow \mathbb{R}^n, \ \bar x=(x_1,\ldots,x_n)\mapsto \mathrm{st}(\bar x):=(\mathrm{st}(x_1),\ldots,\mathrm{st}(x_n)).
\]
 Note that given  $\bar x\in  \mathrm{Fin}^n_\mathbb{R}(\bar R)$, the tuple $\mathrm{st}(\bar x)\in \mathbb{R}^n$ is the unique one in $\mathbb{R}^n$ such that $\bar x$ belongs to all its semialgebraic open neighborhoods defined over $\mathbb{R}$.

\begin{prop}\label{prop:realalg}Let $\mathbb{R}_{\rm alg}$ denote the field of real algebraic numbers with the field structure. Let $I\subset \mathbb{R}$ be an open interval and let $h:I\rightarrow \mathbb{R}$ be a semialgebraic function. If the externally semialgebraic subset
$$
X=\{(x,y)\in (I\cap \mathbb{R}_{\rm alg})\times \mathbb{R}_{\rm alg} \ | \ y<h(x)\} 
$$   
of the semialgebraic group $(\mathbb{R}^2_{\rm alg},+)$ is a Boolean combination of $d$-definable sets, then there are finitely many disjoint open subintervals $I_1,\ldots, I_s\subset I$ such that $I\setminus (I_1\cup \cdots \cup I_s)$ is finite and for each $i=1,\ldots,s$ there are a semialgebraic map $f_i:J_i\rightarrow \mathbb{R}$ defined over $\mathbb{R}_{\mathrm{alg}}$, with $J_i$ an open interval, and elements $c_i,d_i\in \mathbb{R}$ such that each $I_i\subset J_i-c_i$ and $h(x)=f_i(x+c_i)+d_i$ for all $x\in I_i$.
\end{prop}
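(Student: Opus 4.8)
The plan is to reduce the structural conclusion to an application of Proposition \ref{general}, and then to upgrade the resulting \emph{type-level} information to an honest equality of real functions by means of the standard part map $\mathrm{st}$.

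First I would normalise $h$. By cell decomposition for the real field, I can partition $I$ into finitely many points together with finitely many open intervals on each of which $h$ is $\mathcal{C}^1$ and either constant or strictly monotone; the points are absorbed into the allowed finite exceptional set, and a constant value $r\in\mathbb{R}$ gives the conclusion at once with $f_i\equiv 0$, $c_i=0$, $d_i=r$. So fix an open interval on which $h$ is $\mathcal{C}^1$ and strictly monotone, and let $V\subset\mathbb{R}^2$ be the Zariski closure of the graph of $h$, an algebraic curve defined over $\mathbb{R}$. The dichotomy is: if $V$ is defined over $\mathbb{R}_{\mathrm{alg}}$ then $h$ is a branch of an $\mathbb{R}_{\mathrm{alg}}$-algebraic curve, hence $\mathbb{R}_{\mathrm{alg}}$-definable, and the conclusion holds with $f_i=h$ and $c_i=d_i=0$; if $V$ is not defined over $\mathbb{R}_{\mathrm{alg}}$, then $V(\mathbb{R}_{\mathrm{alg}})$ is not Zariski dense in $V$ and is therefore finite, so $h(x)\in\mathbb{R}_{\mathrm{alg}}$ for only finitely many $x\in\mathbb{R}_{\mathrm{alg}}$. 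Deleting these finitely many algebraic points refines the interval, and on each remaining open piece $I_0$ one has $h(x)\notin\mathbb{R}_{\mathrm{alg}}$ for every $x\in I_0\cap\mathbb{R}_{\mathrm{alg}}$. Since a cut of $\mathbb{R}_{\mathrm{alg}}$ is definable exactly when it is realised in $\mathbb{R}_{\mathrm{alg}}$ (definable down-sets in an o-minimal field are rays with endpoint in the field), this says precisely that $\tp(h(x)/\mathbb{R}_{\mathrm{alg}})$ is non-definable for every such $x$, which is the hypothesis of Proposition \ref{general}.

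Next, working inside a real closed field containing both $\mathbb{R}$ and the parameters of the cells, I would intersect $X$ with the vertical strip over $I_0$; since the endpoints of $I_0$ are real, this strip is $d$-definable, so the restriction is still a Boolean combination of $d$-definable cells. Applying Proposition \ref{general} with $D=I_0$ yields externally definable sets $\{F_{ij}\}$ covering $I_0\cap\mathbb{R}_{\mathrm{alg}}$ such that on $F_{ij}$ the fibre of a single $d$-definable cell $C_{ij}$ realises the cut of $h$ from below. By Fact \ref{F:Convex} each $F_{ij}$ is a finite union of convex sets, whose finitely many real boundary points cut $I_0$ into finitely many open subintervals; on each such subinterval a single cell $C_{ij}$ works for all $x$ simultaneously. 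Unpacking $C_{ij}=(\widetilde{C}+(c,d))\cap\mathbb{R}_{\mathrm{alg}}^2$ exactly as in the proof of Proposition \ref{ex}, the relevant cell $\widetilde{C}=(f_1,f_2)$ must be two-dimensional (a single point could not match an infinite cut), and one obtains, for all $x$ in the subinterval meeting $\mathbb{R}_{\mathrm{alg}}$,
\[
\tp\big(h(x)/\mathbb{R}_{\mathrm{alg}}\big)=\tp\big(f_2(x+c')+d'\,/\,\mathbb{R}_{\mathrm{alg}}\big),
\]
with $f_2$ defined over $\mathbb{R}_{\mathrm{alg}}$ and $c',d'$ in the ambient field; as in Proposition \ref{ex} I would first replace $f_2$ by a $\mathcal{C}^1$, strictly monotone restriction.

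Finally I would convert this into a real identity. Two elements with the same cut over the dense subfield $\mathbb{R}_{\mathrm{alg}}$ of $\mathbb{R}$ are infinitely close, so $\mathrm{st}\big(f_2(x+c')+d'\big)=h(x)$ for every $x\in I_0\cap\mathbb{R}_{\mathrm{alg}}$ in the subinterval. Taking $c_i:=\mathrm{st}(c')$ and $d_i:=\mathrm{st}(d')\in\mathbb{R}$ and using continuity of the $\mathbb{R}_{\mathrm{alg}}$-definable function $f_2$ (so that $\mathrm{st}$ commutes with $f_2$), this gives $h(x)=f_2(x+c_i)+d_i$ on a dense subset of the real subinterval, whence everywhere on it by continuity, which is the desired local form. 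The hard part is precisely this last passage: one must guarantee that $c'$ and $d'$ are finite before $\mathrm{st}$ can be applied, and rule out the degenerate possibility that $c'$ is infinite. This is where the mean value theorem estimates from the proof of Proposition \ref{ex} are imported almost verbatim: the same computation that there produced a contradiction now either certifies finiteness of $c',d'$, or, when $c'$ is infinite, forces the asymptotics of $f_2$ to be affine with slope in $\mathbb{R}_{\mathrm{alg}}$, in which case $h$ is itself affine over $\mathbb{R}_{\mathrm{alg}}$ up to a real additive constant and the conclusion again holds. Assembling the finitely many subintervals produced along the way completes the proof.
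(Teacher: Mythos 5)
Your proposal is correct and, at its core, follows the same route as the paper's own proof: normalise $h$ to strictly monotone $\mathcal{C}^1$ pieces, apply Proposition \ref{general} together with the Claim 1--style unpacking from Proposition \ref{ex} to replace the cut of $h(x)$ over $\mathbb{R}_{\rm alg}$ by that of $f_2(x+c')+d'$ with $f_2$ defined over $\mathbb{R}_{\rm alg}$, and then upgrade this type-level identity to a genuine identity of real functions via the standard part map, splitting according to whether $c'$ is finite (take standard parts of $c',d'$ and use continuity of $f_2$, the paper's first case) or infinite (mean value theorem estimates, the paper's second and third cases). The one place where you genuinely go beyond the paper is the verification of the hypothesis of Proposition \ref{general}: the paper invokes that proposition without checking that $\tp(h(x)/\mathbb{R}_{\rm alg})$ is non-definable for every algebraic $x$ --- a condition that fails exactly when $h(x)\in\mathbb{R}_{\rm alg}$ --- whereas your Zariski-closure dichotomy (either the closure of the graph is defined over $\mathbb{R}_{\rm alg}$, in which case $h$ is piecewise $\mathbb{R}_{\rm alg}$-definable and the conclusion is immediate, or the graph has only finitely many points in $\mathbb{R}^2_{\rm alg}$, which can be deleted) supplies precisely the missing reduction; this is a worthwhile addition. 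Two small repairs are needed in your write-up. First, the dichotomy must be run componentwise: the Zariski closure $V$ may be reducible, with some irreducible components defined over $\mathbb{R}_{\rm alg}$ and others not, so one should first subdivide $I$ (up to a finite set) so that the graph of $h$ lies in a single irreducible component and argue on each piece separately. Second, in the infinite-$c'$ case the mean value computation does not force $f_2$ to be asymptotically affine --- for instance $f_2(x)=\ell x+\sqrt{x}$ with $c'$ infinite is perfectly compatible with the situation --- what it forces is that $\lim f_2'$ equals some $\ell\in\mathbb{R}_{\rm alg}\cup\{\pm\infty\}$ (algebraic when finite, since $f_2'$ is $\mathbb{R}_{\rm alg}$-definable), the infinite limit yielding a contradiction and the finite one yielding, after taking standard parts in the mean value identity, that $h$ itself (not $f_2$) is affine with slope $\ell$ and real intercept; this is the conclusion you then correctly record, so the slip is in the phrasing rather than the substance.
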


\begin{proof} Note first that if $h(x)=r$  for some $r\in \mathbb R$, then the statement is clear by setting $s=1$, $I_1=I$, $f_1=0$, $c_1=0$ and $d_1=r$. Therefore, we may assume that $h:I\to \mathbb R$ is not a constant function. So, by o-minimality we can assume that $h:I\to \mathbb R$ is strictly monotone and $\mathcal{C}^1$.  By Proposition \ref{general} and similarly as in the proof of Claim 1 of Proposition \ref{ex}, we can further assume that there is a saturated real closed field extension $\bar R$ of $\mathbb{R}$ and a semialgebraic map $f:J\rightarrow \bar R$ definable over $\mathbb{R}_{\rm alg}$ with $J\subset \bar R$ an open interval and $c,d\in \bar R$ such that $I\cap \mathbb{R}_{\rm alg} \subset J-c$ and
$$
X=\{(x,y)\in (I\cap \mathbb{R}_{\rm alg})\times \mathbb{R}_{\rm alg} \ | \ y<f(x+c)+d\}.
$$
Again by o-minimality we can assume that  $f$ is strictly monotone and $\mathcal{C}^1$, as in Claim 2 of Proposition \ref{ex}.
Note that $I\cap \mathbb{R}_{\rm alg}\subset J-c$ implies $I\subset J-c$, by density.

Moreover, since 
\[
h(x)\equiv _{\mathbb{R}_{\rm alg}} f(x+c)+d
\] 
for all $x\in I\cap \mathbb{R}_{\rm alg}$, we have that $f(x+c)+d\in \mathrm{Fin}_\mathbb{R}(\bar R)$ and so $\text{st}(f(x+c)+d)=h(x)$ for all $x\in I\cap \mathbb{R}_{\rm alg}$. Note by monotonicity of $f$ that we also have $f(x+c)+d\in \mathrm{Fin}_\mathbb{R}(\bar R)$ for all $x\in I\subset \mathbb R$.

Now, we distinguish three cases.

\setcounter{substep}{0}

\begin{substeps} Assume $c\in \mathrm{Fin}_\mathbb{R}(\bar R)$, so clearly also $d\in \mathrm{Fin}_\mathbb{R}(\bar R)$. Set $c':=\mathrm{st}(c)$ and $d':=\mathrm{st}(d)$. We first note that since $J$ is defined over $\mathbb{R}_{\rm alg}$ we have
$$I\cap \mathbb{R}_{\rm alg}=\mathrm{st}(I\cap \mathbb{R}_{\rm alg})\subset \mathrm{st}((J\cap \mathrm{Fin}_\mathbb{R}(\bar R))-c )=\mathrm{st}(J\cap \mathrm{Fin}_\mathbb{R}(\bar R))-c'\subset \mathrm{cl}(J)-c',$$
where $\mathrm{cl}(-)$ denotes the topological closure. Since both $I$ and $J$ are open intervals defined over $\mathbb{R}$, we obtain that $I\cap \mathbb{R}_{\rm alg}\subset I\subset J-c'$. Let us prove that
$$
\mathrm{st}(f(x+c)+d))=f(x+c')+d'
$$
and so $f(x+c')+d'=h(x)$ for all $x\in I\cap \mathbb{R}_{\rm alg}$.
If the latter holds true, then we clearly obtain that 
$$h(x)=f(x+c')+d'$$
for all $x\in I\subset \mathbb R$, since both $f_{|J\cap \mathbb R}:J\cap \mathbb R\to \mathbb R$ and $h:I\to\mathbb R$ are continuous, as required. 

Fix $x_0\in I\cap \mathbb{R}_{\rm alg}$ and let $U$ be a semialgebraic open neighborhood of $f(x_0+c')+d'$ defined over $\mathbb{R}$. We must show that $f(x_0+c)+d\in U$. Consider the semialgebraic map
$$\tilde h:(J-x_0)\times \bar R \to \bar R, \ (y,z)\mapsto f(x_0+y)+z$$
which is continuous and defined over $\mathbb{R}_{\mathrm{alg}}$. Thus, $\tilde h^{-1}(U)$ is an open semialgebraic set defined over $\mathbb{R}$. Since $(c',d')\in \tilde h^{-1}(U)$ and $\mathrm{st}(c,d)=(c',d')$ we deduce that $(c,d)\in  \tilde h^{-1}(U)$ and so $f(x_0+c)+d\in U$.
\end{substeps}	
\medskip

\begin{substeps} Assume $c\in \bar R$ with $\mathbb R<c$. In this case  we may assume that $J$ is an interval of the form $(j,+\infty)$ for some $j\in \mathbb{R}_{\mathrm{alg}}$. If $I(\bar R)$ denotes the $\bar R$-points of $I$, we can consider the $\bar R$-semialgebraic function
$$
g:I(\bar R)\to  \bar R, \ x\mapsto g(x):=f(x+c)+d
$$
which is also strictly monotone and $\mathcal{C}^1$. Recall that $g(x)\in \mathrm{Fin}_\mathbb{R}(\bar R)$ for every $x\in I\cap \mathbb{R}_{\mathrm{alg}}=I(\bar R)\cap \mathbb{R}_{\mathrm{alg}}$. Considering the derivatives functions, we have that $g'(x)=f'(x+c)$ for every $x\in I\cap \mathbb{R}_{\mathrm{alg}}$. 

We distinguish two cases depending on whether the limit of $f'(x)$ exists or not:

\begin{subsubsteps}{2}If $\lim_{x\mapsto +\infty}f'(x)=\pm\infty$, then for every $x\in I(\bar R)\cap \mathrm{Fin}_\mathbb{R}(\bar R)$ we have that $|g'(x)|=|f'(x+c)| >\mathbb R$. Otherwise, there are some $x_0\in I(\bar R)\cap \mathrm{Fin}_\mathbb{R}(\bar R)$ and some $n\in \mathbb{N}$ with $|g'(x_0)|=|f'(x_0+c)|<n$. But by assumption there must exists some non-negative $\delta\in \mathbb{R}$ such that for all $x\in \bar R$ with $x>\delta$ we have that $|f'(x)|>n$. So, since $x_0+c>\delta$, we get $|f'(x_0+c)|>n$, a contradiction.
	
Once we have seen that $|g'(x)|=|f'(x+c)| >\mathbb R$ for  every $x\in I(\bar R)\cap \mathrm{Fin}_\mathbb{R}(\bar R)$, we choose some $x_1,x_2\in I\cap \mathbb{R}_{\mathrm{alg}}$ with $x_1<x_2$. We have that $g(x_2), g(x_1)\in \mathrm{Fin}_\mathbb{R}(\bar R)$. Now, by the mean value theorem there is an element $\hat{x}\in I(\bar R)$ with $x_1<\hat{x}<x_2$ such that
\[
g(x_2)-g(x_1)=g'(\hat{x})\cdot (x_2-x_1).
\] 
Observe that $\hat{x}\in I(\bar R)\cap \mathrm{Fin}_\mathbb{R}(\bar R)$, as $x_1<x_2\in \mathbb R_{\rm alg}$. So, we readily deduce that $|g'(\hat{x})|\cdot (x_2-x_1) >\mathbb{R}$, contradicting the fact that $g(x_2)-g(x_1)\in \mathrm{Fin}_\mathbb{R}(\bar R)$.
\end{subsubsteps}	

\begin{subsubsteps}{2}Assume now that $\lim_{x\mapsto +\infty}f'(x)=\ell$ for some $\ell\in \mathbb{R}_{\mathrm{alg}}$. Let us show that for every $x\in I(\bar R)\cap \mathrm{Fin}_\mathbb{R}(\bar R)$ we have that $g'(x)\in\mathrm{Fin}_\mathbb{R}(\bar R)$ and $\mathrm{st}(g'(x))=\ell$. Indeed, for any $\epsilon\in \mathbb{R}_{>0}$ there is some non-negative $\delta_\epsilon\in \mathbb{R}$ such that for every $x\in J=(j,+\infty)$ with $x>\delta_{\epsilon}$ we have $|f'(x)-\ell|<\epsilon$. Since for every $x\in I(\bar R)\cap \mathrm{Fin}_\mathbb{R}(\bar R)$ and for every $\epsilon \in \mathbb R_{>0}$ we have that $x+c>\delta_\epsilon$, we deduce that
\[
|g'(x)-\ell|=|f'(x+c)-\ell|<\epsilon.
\]
Thus $g'(x)\in \mathrm{Fin}_\mathbb{R}(\bar R)$ for every $x\in I(\bar R)\cap \mathrm{Fin}_{\mathbb R}(\bar R)$  and we obtain that $\mathrm{st}(g'(x)) = \ell$, as claimed.
	
Next, fix some $x_1\in I\cap \mathbb{R}_{\text{alg}}$. By the mean valued theorem, for every $x_2\in I\cap \mathbb{R}_{\text{alg}}$ with $x_1<x_2$ there is some element $\hat{x}\in I(\bar R)$ such that $x_1<\hat{x}<x_2$ and
\[
g(x_2)-g(x_1)=g'(\bar{x})\cdot (x_2-x_1).
\]
Thus, applying the standard map we get $\mathrm{st}(g(x_2))=\mathrm{st}(g(x_1))+\ell\cdot (x_2-x_1)$. So
\[
h(x_2)=h(x_1)+\ell\cdot (x_2-x_1).
\]
It then follows that the function $k:=h(x)-\ell \cdot x\in \mathbb{R}$ is constant for every $x\in I\subset \mathbb{R}$, and so  $h(x)=\ell\cdot  x+k$. Finally, we consider the semialgebraic function $\tilde{f}:\bar R\rightarrow \bar R$ given by $x\mapsto \ell \cdot x$, which is clearly defined over $\mathbb{R}_{\mathrm{alg}}$. Hence, we obtain that  $h(x)=\tilde{f}(x+0)+k$ for all $x\in I$, as required.
	
\end{subsubsteps}	
\end{substeps}	

\begin{substeps} Assume finally that $c\in\bar R$ with $c<\mathbb R$. This case is a straightforward adaptation of the latter case. This finishes the proof.
\end{substeps}	
\end{proof}

\begin{remark}\label{r:extreal}\leavevmode
\begin{enumerate}
	\item Note that if $X\subset \mathbb{R}^m_{\rm alg}$ is an externally semialgebraic set then there is a saturated real closed field $R$ and a semialgebraic subset $Y\subset R^m$ such that $X=Y\cap \mathbb{R}^m_{\rm alg}$. Since we can assume that $R$ contains the real field $\mathbb R$, and types over $\mathbb{R}$ are definable, we deduce that $Z:=Y\cap \mathbb{R}^m$ is a real semialgebraic set with $X=Z\cap \mathbb{R}^m_{\rm alg}$. Thus, externally semialgebraic sets of $\mathbb{R}^m_{\rm alg}$ are Boolean combinations of sets as in the statement of Proposition \ref{prop:realalg}.
	\item The proof of Proposition \ref{prop:realalg} goes through for any o-minimal structure $\mathcal{R}_{\rm alg}$ whose universe is the field of real algebraic numbers $\mathbb{R}_{\rm alg}$  and such that it has an elementary extension $\mathcal{R} \succeq \mathcal{R}_{\rm alg}$  whose universe is the real field $\mathbb{R}$.
\end{enumerate} 
\end{remark}

Finally, we prove:

\begin{cor}\label{C:Alg} Let $\mathbb{R}_{\rm alg}$ be the field of real algebraic numbers and let $G=(\mathbb{R}^2_{\rm alg},+)$. The continuous map $\Lambda : S_G^{\rm ext}(\mathbb{R}_{\rm alg}) \to E(S_G(\mathbb{R}_{\rm alg}))$ is not an isomorphism. 	
\end{cor}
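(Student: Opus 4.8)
The plan is to reduce, via Corollary \ref{C:Equiv-Ellis-ext}, to exhibiting a single externally definable subset of $G(\mathbb{R}_{\rm alg}) = \mathbb{R}^2_{\rm alg}$ that is not a Boolean combination of $d$-definable sets; the criterion there says exactly that the existence of such a set is equivalent to $\Lambda$ failing to be an isomorphism. The natural candidate is the set $X_\pi = \{(x,y) \in \mathbb{R}^2_{\rm alg} \mid y < \pi\cdot x\}$ from the introduction. First I would record that $X_\pi$ is externally semialgebraic: choosing the ambient saturated model $\bar M$ to contain the real field, the $\bar M$-semialgebraic set $Z = \{(x,y) \mid y < \pi x\}$ satisfies $X_\pi = Z \cap \mathbb{R}^2_{\rm alg}$, since $\pi \in \mathbb{R} \subseteq \bar M$.

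Next I would observe that $X_\pi$ is already of the exact shape treated in Proposition \ref{prop:realalg}, taking $I = \mathbb{R}$ and the semialgebraic function $h \colon I \to \mathbb{R}$, $h(x) = \pi x$ (whose graph is the real line $y = \pi x$, hence semialgebraic over $\mathbb{R}$). Arguing by contradiction, I would suppose $X_\pi$ is a Boolean combination of $d$-definable sets. Proposition \ref{prop:realalg} then supplies finitely many open intervals $I_1, \dots, I_s$ covering $\mathbb{R}$ off a finite set, together with semialgebraic functions $f_i$ defined over $\mathbb{R}_{\rm alg}$ and real numbers $c_i, d_i$ such that $\pi x = f_i(x + c_i) + d_i$ holds on each $I_i$.

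To close the argument I would fix a nondegenerate $I_i$, shrink it by o-minimality so that $f_i$ is $\mathcal{C}^1$ on $I_i + c_i$, and differentiate the identity $\pi x = f_i(x + c_i) + d_i$ in $x$, obtaining $f_i' \equiv \pi$ on the interval $I_i + c_i$. Since the derivative of an $\mathbb{R}_{\rm alg}$-semialgebraic function is again $\mathbb{R}_{\rm alg}$-semialgebraic, the graph of $f_i'$ over this interval is $(I_i + c_i) \times \{\pi\}$, a set definable over $\mathbb{R}_{\rm alg}$; projecting to the second coordinate exhibits the singleton $\{\pi\}$ as a point definable over $\mathbb{R}_{\rm alg}$. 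As $\mathbb{R}_{\rm alg}$ is real closed, any such point lies in $\mathbb{R}_{\rm alg}$, so $\pi \in \mathbb{R}_{\rm alg}$, contradicting the transcendence of $\pi$. This shows $X_\pi$ is not a Boolean combination of $d$-definable sets, whence $\Lambda$ is not an isomorphism. I expect the only genuinely delicate point—once Proposition \ref{prop:realalg} is granted—to be this final deduction: one must arrange $f_i$ to be differentiable and use that differentiation preserves definability over $\mathbb{R}_{\rm alg}$, so that the transcendental slope $\pi$ is pinned as a point over $\mathbb{R}_{\rm alg}$ and the contradiction appears. The real substance of the statement, of course, resides in Proposition \ref{prop:realalg} itself, which I am taking as given.
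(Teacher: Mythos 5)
Your overall route is exactly the paper's: reduce via Corollary \ref{C:Equiv-Ellis-ext} to showing that $X_\pi=\{(x,y)\in\mathbb{R}^2_{\rm alg} \mid y<\pi\cdot x\}$ is not a Boolean combination of $d$-definable sets, then apply Proposition \ref{prop:realalg} with $h(x)=\pi x$ and derive a contradiction with the transcendence of $\pi$. The only divergence is the endgame. The paper needs no calculus there: from $\pi x=f(x+c)+d$ on $I$ it writes $f(z)=\pi z-(\pi c+d)$ on $I+c$, evaluates at two distinct real algebraic points $z_1,z_2\in I+c$, and gets $\pi=\bigl(f(z_1)-f(z_2)\bigr)/(z_1-z_2)\in\mathbb{R}_{\rm alg}$, using only that an $\mathbb{R}_{\rm alg}$-definable function sends algebraic points to algebraic points (i.e.\ $\dcl(\mathbb{R}_{\rm alg})=\mathbb{R}_{\rm alg}$ since $\mathbb{R}_{\rm alg}$ is real closed).

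Your endgame, by differentiation, is workable but as written has a repairable slip: you claim the restricted graph $(I_i+c_i)\times\{\pi\}$ is definable over $\mathbb{R}_{\rm alg}$ and project it. That set is in general \emph{not} $\mathbb{R}_{\rm alg}$-definable: the graph of $f_i'$ is, but $c_i$ and the endpoints of $I_i$ are only known to be real, possibly transcendental, so the restriction of that graph to $I_i+c_i$ is only $\mathbb{R}$-definable, and projecting it does not exhibit $\{\pi\}$ as an $\mathbb{R}_{\rm alg}$-definable singleton. The fix is one line and keeps your idea: pick any rational point $q\in I_i+c_i$ (the interval is a nonempty open real interval); then $\pi=f_i'(q)\in\dcl(\mathbb{R}_{\rm alg})=\mathbb{R}_{\rm alg}$, since $f_i'$ is semialgebraic over $\mathbb{R}_{\rm alg}$ --- contradiction. (Alternatively: by o-minimal monotonicity the set of values attained by $f_i'$ on infinite fibers is finite and $\mathbb{R}_{\rm alg}$-definable, and $\pi$ lies in it, so $\pi\in\acl(\mathbb{R}_{\rm alg})=\mathbb{R}_{\rm alg}$.) With that patch your argument is correct; it just spends the extra facts (derivative preserves the parameter field, $\mathcal{C}^1$ decomposition) that the paper's two-point evaluation avoids.
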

\begin{proof}By Theorem \ref{C:Equiv-Ellis-ext} it is enough to show that the externally definable set 
$$\{(x,y)\in  \mathbb{R}_{\rm alg}^2 \ | \ y<\pi \cdot x\}$$
is not $d$-definable. Otherwise, by Proposition \ref{prop:realalg} there is an open interval $I$ of $\mathbb{R}$ and there are a semialgebraic map $f:J\rightarrow \mathbb{R}$ defined over $\mathbb{R}_{\mathrm{alg}}$ and some elements $c,d\in \mathbb{R}$ such that $\pi\cdot x=f(x+c)+d$ for all $x\in I$. In particular, setting $z=x+c\in I+c$ we readily obtain
$$
f(z)=\pi\cdot (z-c)-d=\pi \cdot z-(\pi \cdot c+d).
$$
However, evaluating in any two distinct real algebraic numbers of $I+c$ we deduce that $\pi \in 	\mathbb{R}_{\text{alg}}$, a blatant contradiction.\end{proof}

\begin{remark}\label{defcompact}It is possible to refine the above result in order to find a definably compact example. Indeed, consider $G:=(\mathbb{R}_{\text{alg}}^2,+)$ and $G_1=\big([0,1), +_1\big)^2$  where $[0,1)\subset \mathbb{R}_{\text{alg}}$ and $+_1$ denotes the sum mod $1$. Note that the underlying definable set of $G_1$ is a definable subset of $G$, and therefore the (externally) definable subsets of $G_1$ are (externally) definable subsets of $G$. Note also that $G_1$ is definably compact and isomorphic to the quotient of $G$ by $\Z \times \Z$.
	
	Let $X\subset G_1$ be an externally definable subset such that $X$ is a Boolean combination of $d$-definable in the sense of $G_1$. We \emph{claim} that $X$ is a Boolean combination of $d$-definable in the sense of $G$. 
	
	To ease the reading, we denote by $M$ the field $\mathbb{R}_{\rm alg}$ and let $\bar M$ be an $|M|^+$-saturated elementary extension of $M$. Assume first that $C\subset G_1$ is a $d$-definable in the sense of $G_1$, and let us show that $C$ is a Boolean combination of $d$-definable sets in the sense of $G$. By definition, $C=(Z+_1 y)\cap M$ for some $M$-definable set $Z\subset G_1({\bar M})$ and $y\in G_1({\bar M})$. We show for $F:=\big(\Z\times \Z\big)\cap (-2,1)^2$ that 
	$$
	C=\left( \big((Z+y)\cap M\big)+F \right)\cap [0,1)^2.
	$$
	It is enough to prove that the left hand set is contained in the one of the right hand side, and note that $Z+y\subset [0,2)^2$. For every $x\in C$ there is $x_0\in (Z+y)\cap M$ such that $x-x_0\in \mathbb{Z}\times \mathbb{Z}$. Note that $x-x_0\in (-2,1)^2$ and so $x\in [\big((Z+y)\cap M\big)+F]$, as required. On the other hand, since $\big((Z+y)\cap M\big)+F=\big((Z+F)+y\big) \cap M$ where $Z+F$ is $M$-definable, we deduce that the right term of the equality is a Boolean combination of $d$-definable sets of $G$.
	
	For the general case, suppose $X=\bigcup_i \bigcap_j C_{ij}$ where each $C_{ij}$ is a $d$-definable subset of $G_1$. For each $i$ and $j$, let $Z_{ij}\subset G({\bar M})$ be an $M$-definable set and $y_{ij}\in G({\bar M})$ such that $C_{ij}:=(Z_{ij}+_1 y_{ij})\cap M$. Define for each $i$ and $j$ the set
	$$\widetilde{C}_{ij}:=\left(\big((Z_{ij}+y_{ij})\cap M\big)+F \right)\cap [0,1)^2,$$
	which by the above paragraph is a Boolean combination of $d$-definable subsets of $G$. It is easy to check that 
	$$X=\bigcup_i \bigcap_j {\widetilde C}_{ij},$$
	and therefore we deduce $X$ is also a Boolean combination of $d$-definable subsets of $G$, which finishes the proof of our claim.
	
	Finally, we deduce that the continuous map $\Lambda : S_{G_1}^{\rm ext}(\mathbb{R}_{\rm alg}) \to E(S_{G_1}(\mathbb{R}_{\rm alg}))$ is not an isomorphism. For, by Corollary \ref{C:Equiv-Ellis-ext} and the claim above it is enough to show that the externally definable set
	$$\{(x,y)\in \mathbb{R}^2_{\rm alg}: (0<x<1) \wedge 0<y<\tfrac{\pi}{4}y\}$$
	is not a Boolean combination of $d$-definable sets of $G=( \mathbb{R}^2_{\rm alg},+)$, which follows by a similar argument as in Corollary \ref{C:Alg}.
\end{remark}

\end{document}